\DeclareMathOperator{\supp}{supp}
\newtheorem{definition}{Definition}[section]
\newtheorem{remark}{Remark}[section]
\newtheorem{theorem}{Theorem}[section]
\newtheorem{lemma}{Lemma}[section]
\newtheorem{corollary }{Corollary}
\newtheorem{Lad-Ur}{Ladyzhenskaya-Uraltceva iterative Lemma}
\newtheorem{hypotheses}{Hypothesis}
\newtheorem{exmpl}{Example}
\newcommand{\ind}{\hbox{\rm 1\hskip -4.5pt 1}}
\newtheorem{proposition}{Proposition}[theorem]
\begin{document}

\title[Finite speed of propagation in Generalized Einstein paradigm]
{Einstein's model of "the movement of small particles in a stationary liquid" revisited: finite propagation speed}

\author[I. G. Hevage]{Isanka Garli Hevage $^{1}$}
\address{$^{1}$Department of Mathematics and Statistics, Texas Tech University, TX 79409--1042, U. S. A. 
\newline\\
$^{2}$Department of Mathematics, University of Swansea,
Fabian Way, Swansea SA1 8EN, UK} 
\email{isankaupul.garlihevage@ttu.edu, akif.ibraguimov@ttu.edu, zsobol@gmail.com}


\author[A. Ibraguimov]{ Akif Ibragimov $^{1}$}
\author[Z. Sobol]{Zeev Sobol $^{2}$}

\subjclass[2010]{35K65, 76R50, 35C06, 35Q35, 35Q76}
\keywords{Nonlinear partial differential equations, degenerate parabolic equations, Einstein paradigm, finite propagation speed}

\begin{abstract}
The aforementioned celebrated model, though a breakthrough in Stochastic processes and a great step toward construction of the Brownian motion, leads to a paradox: infinite propagation speed and violation of the 2nd law of thermodynamics. We adapt the model by assuming the diffusion matrix dependent of the concentration of particles, rather than constant it was up to Einstein, and prove a finite propagation speed under assumption of a qualified decrease of the diffusion for small concentration. The method involves a nonlinear degenerated parabolic PDE in divergent form, a parabolic Sobolev-type inequality and the Ladyzhenskaya-Ural'tseva iteration lemma.
\end{abstract}
\maketitle

\section{Introduction}

In his celebrated paper \cite{Einstein56} Einstein models a particles movement as a random walk  of a step time $\tau$ and (random) displacement (\textit{free jump}) $\Delta$, of a symmetric distribution independent of a point and time of observation. The walk is restricted by the mass conservation law written for the concentration function. Using the argument eventually developed into the Ito formula, Einstein shows that the concentration function $u$, being a density of the distribution of particles, satisfies the classical Heat equation (the forward one, since the random walk has a reversible law). (See \cite{Einstein56} \S 4 or Section \ref{Gen-Ein-Para} below for details). 

Though being a revolutionary paper in stochastic processes, and a decisive step toward construction of Brownian motion, this model, however, leads to a Physical paradox. Being a solution to the Heat equation, the concentration $u$ allows for a void volume instantly reaching a positive concentration of particles. Moreover, as the \textit{free jump}s process is reversible, the model allows for all particles, with a wonderful coherence,  instantly  concentrating in a small volume. This contradicts the second law of thermodynamics, as well as demonstrate an infinite propagation speed. 

The aim of this note is as follows: to adapt Einstein's model of \textit{free jump}s, with a random walk replaced with a diffusion process, so as to get free of this paradox. We suppose that the villain of the piece  is the assumption that diffusion $a=\sigma^2/\tau$ is constant ($\sigma^2$ being the variance of $\Delta$). Keeping with the Einstein assumption of an isotropic stationary media, we assume no drifts or sources, but the diffusion matrix $a=a(u)$ dependent of the concentration only, rather than the constant it was up to Einstein. Our aim is to choose $a(u)$ so that concentration function $u$ demonstrates a \textit{finite propagation speed}, that is, if a neighbourhood of a point is void of the particles  at time $T$, then a (smaller) neighbourhood of the same point have been void of the particles for some time preceding $T$.

Finite propagation speed was demonstrated first by G.I.Barenblatt for a degenerate porous media equation (see \cite{Barenblatt-96, Isakif}). (The origin of the porous medium equation differs from the Einstein model.) The example of porous media equation hints that a finite propagation speed appears if small concentration implies small diffusion. Hence we assume a diffusion matrix $\{a_{ij}\}$ to be $a(u)Id$, with a positive continuous function $a$ of concentration $u$, such that $a(0)=0$ (see Hypothesis \ref{tau}). This reflects the case of a higher medium resistance for a small numbers of particles. One may think of hot particles heating the liquid when in numbers, and increasing its permeability. 

Note that concentration function $u$ can be considered in two ways: first, as a function, and second, as a density of the particles distribution. The former approach leads to the backward Kolmogorov equation while the latter leads to the forward one. In this note we follow the first approach, considering a backward equation \eqref{post-Taylor} for $u$. However, reversing the time brings the equation to a regular forward form. 

Nevertheless, this is not the main trick we play with time. The main is a passage to an inner local time of the process, dependent of the concentration as well. On practice, it is multiplication of the equation by some weight $h(u)>0$ (meaning a local change of time 
$t\to \frac t{h(u)}$). This regularises the coefficients of the equation and allows us to consider its divergent form and apply a rich technique of weak solutions.

The main result on finite speed of propagation is  Theorem \ref{Main-T}. It states that the concentration $u$ demonstrates a finite propagation speed, provided the following.
\begin{align}
    &\limsup\limits_{u\to 0}a(u)I(u)<\infty\label{a} , 
    \\
    &\mbox{there exist } c,\mu>0 \mbox{ such that } \
    a(u)I^\mu(u) \geq c a(v)I^\mu(v),\ 0<u<v,
    \label{adecr}
\end{align}
\begin{equation}\label{I-s}
\text{with}\qquad I(u) = \int_u^\infty \frac{dv}{v\,a(v)}.
\end{equation}

The article is organized as follows. In Section \ref{Gen-Ein-Para}, we consider generalization of the classical Einstein model of an $N$-dim Brownian motion,
with source term and general diffusion coefficients. 
We use the generic mass conservation law \eqref{cons} to derive a partial differential inequality \eqref{M-2} for concentration $u$ in Section \ref{Nonlin}. In the same section we
pass to the inner local time in \eqref{L-OP}, and to weak solutions formulation in \eqref{weak}. In Section \ref{Auxiliary} we bring various auxiliary results necessary for the proof of Theorem \ref{Main-T}. They include an ingenious non-linear parabolic Sobolev-type inequality \eqref{lemma-2-R} and the celebrated iteration lemma by Ladyzhenskaya and Ural'tseva \ref{Lady-lemma}. In Section \ref{localization} we prove the finite propagation speed property. Section \ref{Exmp-P-F} focuses on various examples of generalised Einstein models satisfying conditions of the main theorem.

\section {Generalized Einstein model}\label{Gen-Ein-Para}

For a space-time point of observation $Z=(x,s)\in \mathbb{R}^N\times[0,\infty)$, consider an $\mathbb{R}^N$-valued random \textit{free jumps process} $\vec{\Delta}^Z$,
\[
\vec{\Delta}^Z \triangleq (\vec{\Delta}_t)_{t\geq s}^Z 
,
\]
describing a interaction-free displacement of a particle off $Z$ (so $\vec{\Delta}^Z_s=0$ a.s.). We assume the following extension of the axioms of classical Brownian motion.
\begin{hypotheses}\label{diffusion}   \normalfont\textit{Free jumps process}  $(Z,x+\vec{\Delta}^Z)_{Z\in \mathbb{R}^N\times \mathbb{R}_+}$ is a diffusion process with a state space $\mathbb{R}^N$ (see, e.g, \cite{Skorohod} Ch.2 Sect. 1.3). In particular it means that \textit{free jumps process} is a Markov process with continuous trajectories and that the following axioms hold:
\begin{enumerate}
    \item \normalfont {Whole universe axiom:}
 \begin{align}\label{uni-ax}
      \mathbb{P}\{\vec{\Delta}^Z_t\in \mathbb{R}^N \}= 1,\quad t>0.
 \end{align}
 \item \normalfont {Trajectory continuity axiom:}
 for all $\epsilon>0$,  uniformly in $Z\in \mathbb{R}^N\times \mathbb{R}_+$,
\begin{equation}\label{cont}
  \frac1T\sup\limits_{0<t<T}\,\mathbb{P}\{\| \vec{\Delta}^Z_t\|>\epsilon\}
\to 0 \quad\text{as}\quad T\to0.
\end{equation}
See \cite{Skorohod} Ch.2 Sect. 1.1.3 for the proof of  continuity of a.a. trajectories of the process.
\item  \normalfont {Diffusion coefficients axiom:} for some (hence all) $\epsilon>0$, uniformly in $Z\in \mathbb{R}^N\times \mathbb{R}_+$,
\begin{align}
    \frac1T\sup\limits_{0<t<T}\,\mathbb{E}\left(\vec{\Delta}^{Z}_t \times\vec{\Delta}^{Z}_t\right) \ind_{\{|\vec{\Delta}^{Z}_t|<\epsilon\}} & \to a(Z),\label{diff}\\
    \frac1T\sup\limits_{0<t<T}\,\mathbb{E}\vec{\Delta}^{X}_t \ind_{\{|\vec{\Delta}^{Z}_t|<\epsilon\}} & \to b(Z),\label{trans}
\end{align}
as $t\to0$. Matrix $a(Z)=\left\{a_{ij}(Z)\right\}_{ij=1}^N$ is referred as \textit{diffusion matrix}, and vector $b(Z)=\left\{b_{k}(Z)\right\}_{k=1}^N$ is referred as \textit{transport(drift) coefficient} (cf \cite{Skorohod}, Part II, Definition 1.3.1).
\item We assume the lifetime $\tau^Z$ of $\vec{\Delta}^Z$ locally bounded from below: for every bounded domain $G\Subset \mathbb{R}^N\times \mathbb{R}_+$
    \begin{equation}\label{bounded-conc}
     \tau_G \triangleq \inf\limits_{Z\in G} \tau^Z >0.
    \end{equation}
\end{enumerate}
\end{hypotheses}
An important property of a diffusion process is existence of a correspondent diffusion differential operator, as it is shown in the following lemma (see \cite{Skorohod}, Part II, Lemma 1.3.1.)
\begin{lemma}[Diffusion operator]\label{DO}
Let $\phi$ be a bounded twice differentiable function on $\mathbb{R}^N$. (By the latter we understand that $\phi\in C^1(\mathbb{R}^N)$ and $\nabla \phi$ is absolutely continuous on any ray.) Then, locally uniformly in $Z\in \mathbb{R}^N\times \mathbb{R}_+$,
\begin{equation}\label{rhs1}
\begin{aligned}
\frac1T\sup\limits_{0<t<T}\,\left(\mathbb{E}\,\phi(x + \vec{\Delta}^Z_t) - \phi(x)\right) &
\to  \frac{1}{2}\sum_{i,j=1}^N  a_{ij}(Z){\phi}_{x_{i} x_{j}} (x) -
\sum_{k=1}^N  b_{k}(Z){\phi}_{x_{k}} (x),\\
\text{as}\quad & T\to0.
 \end{aligned}
\end{equation}
\end{lemma}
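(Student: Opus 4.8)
The plan is to derive \eqref{rhs1} from a second-order Taylor expansion of $\phi$ against the distribution of $\vec\Delta^Z_t$, reducing everything to the three quantitative axioms of Hypothesis \ref{diffusion}; throughout, the scaling limits are read in the uniform sense in which those axioms are stated. Fix $Z=(x,s)$ and a small $\epsilon>0$. Since $\phi\in C^1(\mathbb R^N)$ and $\nabla\phi$ is absolutely continuous on rays, for every increment $y\in\mathbb R^N$ one has the exact identity
\[
\phi(x+y)-\phi(x)=\nabla\phi(x)\cdot y+\int_0^1(1-r)\,y^{\mathsf T}D^2\phi(x+ry)\,y\;dr ,
\]
with $D^2\phi$ the a.e.-defined, bounded Hessian. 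I would substitute $y=\vec\Delta^Z_t$, take expectations, and split the result according to the event $\{|\vec\Delta^Z_t|<\epsilon\}$ and its complement.

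On the complement, $|\phi(x+\vec\Delta^Z_t)-\phi(x)|\le 2\|\phi\|_\infty$, so that contribution is at most $2\|\phi\|_\infty\,\mathbb P\{|\vec\Delta^Z_t|\ge\epsilon\}$, which is negligible after the $\frac1T\sup_{0<t<T}$ normalization by the trajectory-continuity axiom \eqref{cont}. On $\{|\vec\Delta^Z_t|<\epsilon\}$ I would handle the linear term $\mathbb E[\nabla\phi(x)\cdot\vec\Delta^Z_t\,\ind]$ directly by the drift axiom \eqref{trans}, obtaining the first-order term of \eqref{rhs1}; write the remainder integral as $\tfrac12(\vec\Delta^Z_t)^{\mathsf T}D^2\phi(x)\vec\Delta^Z_t$ plus the error $\int_0^1(1-r)(\vec\Delta^Z_t)^{\mathsf T}\big[D^2\phi(x+r\vec\Delta^Z_t)-D^2\phi(x)\big]\vec\Delta^Z_t\,dr$; apply the diffusion axiom \eqref{diff} to the first piece to obtain the second-order term of \eqref{rhs1}; and bound the error on $\{|\vec\Delta^Z_t|<\epsilon\}$ by $\tfrac12|\vec\Delta^Z_t|^2$ times the oscillation of $D^2\phi$ on the $\epsilon$-ball about $x$, which is controlled, after one more use of \eqref{diff} to estimate $\mathbb E[|\vec\Delta^Z_t|^2\ind]$, and tends to $0$ as $\epsilon\to0$. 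Assembling the three pieces, bounding $\frac1T\sup_{0<t<T}\big(\mathbb E\phi(x+\vec\Delta^Z_t)-\phi(x)\big)$ from above and below, letting $T\to0$ and then $\epsilon\to0$, gives \eqref{rhs1}.

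The genuine obstacle is the low regularity of $\phi$: when $\nabla\phi$ is merely absolutely continuous, $D^2\phi$ need not be continuous, so the ``oscillation of $D^2\phi$'' bound above is not literally available and the convergence need not be locally uniform in $Z$. I would get around this by mollification: first prove \eqref{rhs1} for $\phi\in C^2$ with bounded first and second derivatives, where the oscillation estimate is classical and uniform on compacta; then approximate a general admissible $\phi$ by $\phi_\delta=\phi*\rho_\delta$, apply the result to $\phi_\delta$, and pass to the limit using $\|\phi_\delta\|_\infty\le\|\phi\|_\infty$, $\|D^2\phi_\delta\|_\infty\le\|D^2\phi\|_\infty$, $\phi_\delta\to\phi$ pointwise, $D^2\phi_\delta\to D^2\phi$ a.e., together with the uniform tail bound from \eqref{cont} and the uniform second-moment bound from \eqref{diff} to justify exchanging the limits in $\delta$, $t$ and $\epsilon$. (Alternatively, one notes that an admissible $\phi$ is twice differentiable in the Peano sense at a.e.\ $x$, which is enough for the pointwise statement.)
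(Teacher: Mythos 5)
The paper offers no proof of Lemma \ref{DO} at all --- it simply cites \cite{Skorohod}, Part II, Lemma 1.3.1 --- and your argument is exactly the standard proof behind that citation: Taylor expansion with integral remainder, truncation to the event $\{|\vec{\Delta}^Z_t|<\epsilon\}$, with the axioms \eqref{cont}, \eqref{diff} and \eqref{trans} absorbing, respectively, the large-jump contribution, the quadratic term, and the linear term, followed by $T\to0$ and then $\epsilon\to0$. For $\phi\in C^2$ with bounded, uniformly continuous second derivatives this is complete, and the local uniformity in $Z$ comes for free from the uniformity built into the axioms plus the uniform modulus of continuity of $D^2\phi$ on compacta.

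Two points to flag. First, a sign: your expansion produces the linear term $+\nabla\phi(x)\cdot b(Z)=+\sum_k b_k(Z)\phi_{x_k}(x)$, which agrees with \eqref{post-Taylor} but not with the minus sign printed in \eqref{rhs1}; you should note the discrepancy (almost certainly a typo in the lemma) rather than assert that the drift axiom yields ``the first-order term of \eqref{rhs1}'' as written. Second, the mollification step does not actually close the low-regularity case that you correctly single out as the genuine obstacle: applying the $C^2$ result to $\phi_\delta=\phi*\rho_\delta$ produces the generator acting on $\phi_\delta$, and $D^2\phi_\delta\to D^2\phi$ only a.e.\ and in $L^p_{loc}$ --- not uniformly --- so the uniform bounds you list do not justify interchanging the limits in $\delta$ and $T$, and at best one recovers the identity for a.e.\ $x$ (your Peano-derivative remark), not the locally uniform convergence in $Z$ that the lemma claims. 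Since the paper asserts the lemma at this level of generality without proof, this is as much a defect of the statement as of your argument, but it should be presented as an open caveat rather than as resolved.
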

The second component of the Einstein paradigm is the mass (concentration) conservation law for a concentration function $u=u(x,s)$. The law reflects
the idea of a spreading inkblot: the concentration $u(x,s-t)$ of particles at a space-time point $(x,s-t)$ equals to the total concentration of particles at points $(x + \vec{\Delta}^{(x,s-t)}_t ,s)$ reached by particles by the time $s$ plus/minus the number of particles consumed/produced in the way by the time period $[s - t, s]$, with a consumption-production reported by its flux $M$, $M(Z)$ being the quantity consumed/produced at a space-time point $Z$ (a reader may think of a spreading and drying ink).
\begin{hypotheses} \ {\normalfont (Axiom of Mass Conservation)}
\begin{equation}\label{cons}
u(x, s - t) =
\mathbb{E}\, u(x + \vec{\Delta}^{(x,s-t)}_t, s)
+ \int\limits_0^t \mathbb{E} M(x + \vec{\Delta}^{(x,s-t)}_r, s - t +r)dr
\end{equation}
\end{hypotheses}
If we assume some regularity of $u$, we arrive at the following differential equation (see \cite{Skorohod} Ch.2 Theorem 1.3.1 for the idea of the proof).

\begin{theorem}\label{DEbackward}
  Assume that $u(s)$ is twice differentiable in the spatial variable (in the sense of \ref{DO}) for every $s\in \mathbb{R}_+$. Then $u$ is absolutely continuous in time $s$ and satisfies equation
  \begin{equation}
- u_s(Z) =
 \frac{1}{2}\sum_{i,j=1}^N  a_{ij}(Z){u}_{x_{i} x_{j}}(Z)
 + \sum_{k=1}^N  b_{k}(Z){u}_{x_{k}}(Z)
  + M(Z).
 \label{post-Taylor}
\end{equation}
\end{theorem}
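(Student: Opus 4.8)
The plan is to differentiate the mass–conservation identity \eqref{cons} in the elapsed time $t$ at $t=0$, converting the random spatial shift $x+\vec\Delta^{Z_0}_t$ into the diffusion differential operator supplied by Lemma \ref{DO}, and to read off both the time–regularity of $u$ and equation \eqref{post-Taylor} from the outcome. Fix an observation point $Z_0=(x,s_0)$. By the lifetime bound \eqref{bounded-conc}, identity \eqref{cons} read with $s=s_0+t$ holds for all $t\in[0,t_0)$ for some $t_0>0$ and takes the form
\begin{equation*}
u(x,s_0)=\mathbb E\,u\bigl(x+\vec\Delta^{Z_0}_t,\,s_0+t\bigr)+\int_0^t m(r)\,dr,\qquad m(r):=\mathbb E\,M\bigl(x+\vec\Delta^{Z_0}_r,\,s_0+r\bigr).
\end{equation*}
Since the left-hand side is independent of $t$, the function $g(t):=\mathbb E\,u(x+\vec\Delta^{Z_0}_t,s_0+t)$ equals $u(x,s_0)-\int_0^t m(r)\,dr$. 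Continuity of $M$ and the trajectory–continuity axiom \eqref{cont} (which gives $\vec\Delta^{Z_0}_r\to0$ in probability) yield $m(r)\to M(Z_0)$ as $r\to0$ together with a local bound on $m$; hence $g$ is locally Lipschitz with $g'(0)=-M(Z_0)$.

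Next I would establish absolute continuity of $s\mapsto u(x,s)$. Applying Lemma \ref{DO} to $\phi=u(\cdot,s)$ — localised by a smooth cutoff, which is harmless because $\vec\Delta^{Z_0}_t$ stays near $x$ with probability tending to $1$ as $t\to0$ — gives a two-sided estimate $\bigl|\mathbb E\,u(x+\vec\Delta^{(x,s)}_t,s)-u(x,s)\bigr|\le C t$ for small $t$, with $C$ controlled by the first and second spatial derivatives of $u(\cdot,s)$ near $x$ and by $a,b$. Feeding this back into \eqref{cons} with base point $(x,s_1)$ and increment $t=s_2-s_1$, and using the local bound on the corresponding $m$, gives $|u(x,s_1)-u(x,s_2)|\le C'\,|s_1-s_2|$ on compact $s$-intervals. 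Thus $s\mapsto u(x,s)$ is locally Lipschitz, in particular absolutely continuous, and $u_s(x,\cdot)$ exists a.e.

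It then remains to identify $g'(0)$ a second way. Decompose
\begin{equation*}
g(t)-u(x,s_0)=\underbrace{\mathbb E\bigl[u(x+\vec\Delta^{Z_0}_t,s_0+t)-u(x+\vec\Delta^{Z_0}_t,s_0)\bigr]}_{(\mathrm I)}+\underbrace{\bigl(\mathbb E\,u(x+\vec\Delta^{Z_0}_t,s_0)-u(x,s_0)\bigr)}_{(\mathrm{II})}.
\end{equation*}
By Lemma \ref{DO}, $(\mathrm{II})/t$ converges to the value at $Z_0$ of the diffusion operator applied to $u(\cdot,s_0)$, i.e. to the spatial part of the right-hand side of \eqref{post-Taylor}. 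For $(\mathrm I)$, set $\psi_t(y):=u(y,s_0+t)-u(y,s_0)$, so that $(\mathrm I)=\psi_t(x)+\mathbb E\bigl[\psi_t(x+\vec\Delta^{Z_0}_t)-\psi_t(x)\bigr]$ and hence $(\mathrm I)/t=\tfrac1t\bigl(u(x,s_0+t)-u(x,s_0)\bigr)+R(t)$, where $R(t):=\tfrac1t\mathbb E\bigl[\psi_t(x+\vec\Delta^{Z_0}_t)-\psi_t(x)\bigr]$. Granting $R(t)\to0$, one gets $(\mathrm I)/t\to u_s(Z_0)$ at every $s_0$ at which $u_s(x,\cdot)$ exists; combining with the first computation, $-M(Z_0)=g'(0)=u_s(Z_0)+\bigl(\text{diffusion operator of }u\text{ at }Z_0\bigr)$, which is exactly \eqref{post-Taylor}.

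The main obstacle is the commutator term $R(t)$: one must pull the time increment of $u$ out of the expectation over the random spatial shift at rate $o(t)$. This calls for a quantitative form of Lemma \ref{DO} in which the $o(t)$ error is controlled by the local $C^{1,1}$-size of the test function — such a form is implicit in the Taylor-expansion proof of the lemma together with the diffusion-coefficient axiom \eqref{diff}--\eqref{trans} and \eqref{cont}; granted it, $R(t)\to0$ because $\psi_t\to0$ together with its first and second spatial derivatives near $x$ as $t\to0$, which in turn requires the spatial derivatives of $u$ to depend continuously on $s$ near $Z_0$, a mild regularity assumption beyond the pointwise spatial twice-differentiability in the statement. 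The same quantitative form delivers the $s$-uniform constant needed for the Lipschitz bound above.
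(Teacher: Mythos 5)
The paper does not actually prove Theorem \ref{DEbackward}: it only points the reader to \cite{Skorohod}, Ch.~2, Theorem~1.3.1 ``for the idea of the proof,'' so there is no in-text argument to compare against. Your reconstruction is the standard route and is sound in outline: rewriting \eqref{cons} with base point $(x,s_0)$, noting that $g(t)=\mathbb E\,u(x+\vec\Delta^{Z_0}_t,s_0+t)$ is pinned down by the consumption term so that $g'(0)=-M(Z_0)$, and then splitting $g(t)-u(x,s_0)$ into a purely spatial increment (handled by Lemma \ref{DO}) plus a purely temporal increment plus a commutator $R(t)$ is precisely how such results are established. The caveats you flag yourself are the genuine ones and should stay explicit: killing $R(t)$ and obtaining an $s$-uniform constant for the Lipschitz (hence absolute-continuity) bound both require the first and second spatial derivatives of $u(\cdot,s)$ to be locally bounded and continuous in $s$ near $Z_0$, which goes beyond the literal pointwise hypothesis of the theorem --- but this is consistent with the paper's own hedge ``if we assume some regularity of $u$.'' One small point you gloss over: Lemma \ref{DO} as printed yields $-\sum_k b_k(Z)\phi_{x_k}(x)$ for the drift, whereas \eqref{post-Taylor} carries $+\sum_k b_k(Z)u_{x_k}(Z)$, so identifying $(\mathrm{II})/t$ with ``the spatial part of the right-hand side of \eqref{post-Taylor}'' matches the lemma only up to the sign of the drift term; this is immaterial for the rest of the paper (which sets $b=0$), but your write-up should quote the lemma's sign and note the discrepancy rather than silently absorb it.
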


\begin{remark}\label{einstein}
  Einstein in \cite{Einstein56} made a strong assumption that the distribution of \emph{free jump} $\vec{\Delta}^Z$ does not depend on $Z$. This yielded diffusion coefficients $a$ and $b$ being constant and allowed Einstein to write down \eqref{cons} in the forward way (cf. \cite{Einstein56}, p.14). So by choice $a=Id$, $b=0$ and $M=0$ he reduced the problem to the standard heat equation. However, this leads to violation of the second law of thermodynamics. Indeed, with $a=Id$, $b=0$ and $M=0$, equation \eqref{post-Taylor}
  becomes $-u_s = \tfrac12 \Delta u$. Then a solution $u$,
  \[
  u(x,s) = \left(2\pi(t-s)\right)^{-\frac N2}\exp\left\{-\frac{|x|^2}{2(t-s)}\right\}, \qquad x\in\mathbb{R}^N,\ 0\leq s<t
  \]
  is positive on $\mathbb{R}^N\times [0,t)$ while $u(x,s)\to0$ as $s\uparrow t$ for all $x\ne0$. Thus, all particles instantly concentrate at zero at time $t$, demonstrating not only an impossible coherence but also an infinite speed. This paradox is the main motivation of our study.
\end{remark}

\section{Nonlinear Degenerate Inequality} \label{Nonlin}
In this section we make final assumptions on the structure of the Generalised Einstein model and state the main problem of the note. Our aim is to establish some conditions on diffusion coefficients $a$ and $b$ and consumption-production flux $M$ in \eqref{post-Taylor} allowing us to escape the paradox described in Remark \ref{einstein}. First we bring \eqref{post-Taylor} into a forward form, more used for specialists in parabolic PDEs. We will fix a domain $\Omega$ and a time-horizon $T>0$, and consider  \eqref{post-Taylor} on
$\Omega_T \triangleq \Omega \times (0,T)$. With a change of variables $t=T-s$, equation \eqref{post-Taylor} takes the following form:
\begin{align}
   u_t(Z) =
 \frac{1}{2}\sum_{i,j=1}^N  a_{ij}(Z){u}_{x_{i} x_{j}}(Z)
 + \sum_{k=1}^N  b_{k}(Z){u}_{x_{k}}(Z)
  + M(Z), \qquad Z\in \Omega_T
\label{M-1}
\end{align}
We seek to establish the following property of $u$.
\begin{definition}[Finite propagation speed]\label{FPS}
A function $u\geq0$ on $\Omega_T$ is said to enjoy a finite propagation speed in $t$ if, for any open ball
$B\subset \Omega$ and any $\epsilon\in(0,1)$, there exists $s\in (0,T]$ (which might depend on $B$, $\epsilon$ and $u$), such that, given $u(x,0)=0$
for all $x\in B$, one has $u(x,t)=0$ for all $(x,t) \in \epsilon B\times[0,s]$.
\end{definition}
Obviously, if $u$ enjoys a finite propagation speed in $t$, then particles demonstrate finite speed in $s$, too. Moreover, a point $x$ is void of particles at time $T$ only if it was such for some time before. Thus, a finite propagation speed resolves the paradox of the Einstein model.

In this note we consider a simple model to study the very essence of the phenomenon. Hence the following assumptions  in the form of Hypothesis
\begin{hypotheses}\label{tau}\text{}
\begin{description}
\item[No drift:] $b=0$;
\item[No consumption:] $M\leq 0$ (recall that $M$ has come from \eqref{cons} which is a backward equation);
\item[Basic diffusion matrix:] $a_{ij}=2a(u)\delta_{ij}$, $i,j=1,2,\ldots,N$, with some $a\in C[0,\infty)$, $a(u)>0$ for $u>0$ and $a(0)=0$. Hypothesis \ref{diffusion}(iv) suggests that $u$ is locally bounded.  Hence we are free in choosing a behaviour of $a$ at infinity. So we
assume $a$ such that, with $I(u)$ defined as in \eqref{I-s},
\begin{align}
 &I(u) <\infty, \qquad u>0; \\
 &\limsup\limits_{u\to\infty}a(u)I(u)<\infty.\label{infty}
\end{align}
Note that $I(u)\to\infty$ as $u\to0$ and that \eqref{a} implies that $aI$ is a bounded continuous function.
\end{description}
\end{hypotheses}
The concept of a lower diffusion speed for a lower concentration of particles (the only hope to obtain a finite propagation speed) reflects the case of a higher medium resistance for a smaller numbers of particles.

With Hypothesis \ref{tau} equation \eqref{M-1} converts to the following inequality
\begin{align}
    u_t \leq a(u) \Delta u, \qquad \text{in}\quad \Omega_T. \label{M-2}
\end{align}
We will look for solution in the following class.
\begin{definition}\label{solution}
A weak positive bounded solution $u$ to \eqref{M-2} is meant to be a positive 
$u \in L^{\infty}(\Omega_T)$ such that $\nabla u\in L^{2}_{loc}\left( \Omega_T\right)$, and map $[0,T]\ni t\mapsto u(t)\in L_{loc}^0(\Omega)$ is continuous. (Hence, for any $Q\in C(\mathbb{R}_+)$ and $p\geq1$, map $[0,T]\ni t\mapsto Q\big(u(t)\big)\in L_{loc}^p(\Omega)$ is continuous as well.)
\end{definition}
We do not require $u$ neither differentiable in $t$ not twice differentiable in $x$ as it is not a classic solution. Our approach is different: we perform a local time change $t\to \frac t{h(u)}$, that is,
we multiply \eqref{M-2} by a strictly positive function $h(u)$, with $F=ha$ and primitive $H$ of $h$. Hence we obtain an equivalent form of \eqref{M-2}:
\begin{align}
 [ H(u)]_t -   F(u)\Delta u \leq 0 \quad \mbox{in } \Omega_T. \label{L-OP}
\end{align}
The first properties we require  of $h,H$ and $F$ are as in the following definition.
\begin{definition} \label{FH-def}
\text{}
\begin{itemize}
\item
     Let $h\in C(0,\infty)$, $h > 0$, integrable at 0, and such that $ha$ is a monotone locally Lipschitz function, $h(u)a(u)\to 0$ as $u\to 0$. Let
     \begin{align}
          H(u) \triangleq\int_0^u h(s) \ ds.   \label{H-F-def}
     \end{align}
    \item   Let 
    \begin{align}
         F(u) \triangleq h(u)a(u)   \label{F-def}
     \end{align}
    By the preceding, $F(0)=0$, and $F$ is differentiable on $(0,\infty)$ with a locally bounded derivative $F^{'}\geq0$. \label{F-def}
\end{itemize} 
\end{definition}
Note that $H(u),\nabla F(u)\in L^2_{loc}(\Omega_T)$ if $u$ is as in Definition \ref{solution}. Hence we understand $F(u)\Delta u$ and $H(u)]_t$ in the weak sense, by which we mean the following. 
For every
$\phi\in Lip_c(\Omega_T)$,
\begin{equation*}
\begin{aligned}
-\int_{\Omega_T} \phi F(u)\Delta u dx\,dt & =
\int_{\Omega_T} \nabla u \nabla \left (F(u)\phi\right)dx\,dt\\
 \int_{\Omega_T} \phi [H(u)]_t dx\,dt
 & = - \int_{\Omega_T} \phi_t H(u) dx\,dt.
\end{aligned}
\end{equation*}
Thus, by a (positive bounded) solution $u$ to \eqref{M-2} we mean $u$ as in Definition \ref{solution} satisfying \eqref{L-OP} in the following way:
\begin{equation}\label{weak}
 \int_{\Omega_T} \nabla u \nabla \left (F(u)\phi\right)dx\,dt
 \leq 
 \int_{\Omega_T} \phi_t H(u) dx\,dt, \qquad
\phi\in Lip_c(\Omega_T),\ \phi\geq0. 
\end{equation}

\section{Auxiliary results}\label{Auxiliary}
In this section we introduce some structure functions and bring auxiliary results necessary for proving the finite propagation speed for $u$ satisfying \eqref{weak}. 

In addition to structure functions $F$ and $H$,
let
\begin{align}
          G(s) \triangleq \int_{0}^{s} \sqrt{F^{'}(\sigma)} \ d\sigma. \label{G-def}.
     \end{align}
    So  $\sqrt{F^{'}(u)} = G^{'}(u)$.
\begin{remark}\label{G-F-2}
Note that
$ \displaystyle
0\leq G(u)=\int_0^u G^{'}(s)ds \leq \sqrt{u\int_0^u F^{'}(s)ds} = \sqrt{uF(u)}$, and all functions $F,G$ and $H$ are increasing on closed interval.
\end{remark}
Now we will make a set up of functions $H$ and hence $h$, $F$ and $G$.

\begin{definition}\label{PIH}
For some $\Lambda>0$, and $I$ as in \eqref{I-s}, define
\begin{align}\label{H-choice}
 H(s) \triangleq
    \left[\Lambda I(s)\right]^{-\frac{1}{\Lambda}} =
    \left(\Lambda \int_{s}^{\infty} \frac{1}{\tau a(\tau)} d\tau\right)^{-\frac{1}{\Lambda}},
    ~ s>0.
\end{align}
\end{definition}
\begin{remark} \label{new-def}
In \eqref{H-choice}, $H(s) \to 0$ as $s\to 0$. Moreover,
\begin{align}
    h(s) & = \frac{1}{sa(s)}\left[  \Lambda I(s) \right]^{-\frac{1}{\Lambda}-1}
    = \frac{1}{sa(s)} H^{(\Lambda+1)}(s)
    \ , ~ s>0. \label{h-ret}\\
    F(s)& =h(s)a(s)=\frac{1}{s} H^{\Lambda+1}(s)=\left(\Lambda s^{\frac{\Lambda}{1+\Lambda}}I(s) \right)^{-\frac{1}{\Lambda}-1}.
    \label{F-result}
\end{align}
Finally, with $\lambda\in(0,2)$ such that $\Lambda + 1= \dfrac{2}{\lambda}$ one has
\begin{equation}\label{A-2}
 \left(sF(s)\right)^{\frac\lambda2}\leq H(s)
\end{equation}
\end{remark}
Note that in general $F$ defined by \eqref{F-result} is neither monotone increasing nor vanishes at zero. So it does not automatically satisfy Definition \ref{FH-def}. This properties and parabolic Sobolev inequality is the subject of the following proposition.
\begin{proposition}\label{parabolic_Sobolev}
Let \eqref{a} hold. Choose $\Lambda$,
\begin{equation}\label{Lambda}
    0< \Lambda < \frac1{(\sup\limits_u a(u)I(u))_+},
\end{equation}
and $H,F,G$ as in \eqref{H-choice}, \eqref{F-result} and
\eqref{G-def}, respectively.

Then $F'>0$ and $F(0)=0$. Moreover, the following parabolic Sobolev-type inequality holds: for all
\begin{itemize}
    \item domain $\Omega\subset \mathbb{R}^N$ and $T>0$;
    \item $\theta\in Lip_c(\Omega)$, $0\leq\theta\leq1$ and
$K\subset \{\theta=1\}$;
\item $u\in L^\infty_{loc}(\Omega\times[0,T])$,
$\nabla u\in L^2_{loc}(\Omega \times (0,T)) $ and $\nabla G(u) \in L^2_{loc}(\Omega \times [0,T])$;
\end{itemize}
one has
\begin{align}
\int_{0}^{t} \int_{K} G^{2}(u) dxdt
\leq S^{k(1+j)}
t^{1-(1+j)k}
\left[  \sup_{0\leq \tau \leq t} \int_{\Omega} \theta^{2} H (u(\tau)) dx
+
 \int_{0}^{t}\int_{\Omega} |\nabla(\theta u)|^2 dx d\tau
    \right]^{1+jk}.\label{lemma-2-R}
\end{align}
Here $j=\frac2{N-2}$,
$k=\frac{\Lambda}{\Lambda + j + j\Lambda}$,
and $S$ is a constant in  the Sobolev inequality
\begin{equation}
\|\psi\|_{L^{2+2j}}^{2}
\leq
S \| \nabla \psi \|_{L^{2}}^{2}.\label{Gil-Nir}
\end{equation}
\end{proposition}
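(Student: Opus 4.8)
The plan is to split the proposition into two independent parts: the elementary structural claim that $F'>0$ and $F(0)=0$, and the substantial parabolic Sobolev-type inequality \eqref{lemma-2-R}.

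For the structural part, I would work directly from the closed form \eqref{F-result}, namely $F(s)=\bigl(\Lambda s^{\Lambda/(1+\Lambda)}I(s)\bigr)^{-(1+1/\Lambda)}$. Writing $\Phi(s)=\Lambda s^{\Lambda/(1+\Lambda)}I(s)$, we have $F=\Phi^{-(1+1/\Lambda)}$, so $F'>0$ is equivalent to $\Phi'<0$. Differentiating, $\Phi'(s)=\Lambda\bigl(\tfrac{\Lambda}{1+\Lambda}s^{-1/(1+\Lambda)}I(s)+s^{\Lambda/(1+\Lambda)}I'(s)\bigr)$, and since $I'(s)=-1/(s\,a(s))$ by \eqref{I-s}, this becomes $\Lambda s^{-1/(1+\Lambda)}\bigl(\tfrac{\Lambda}{1+\Lambda}I(s)-\tfrac1{a(s)}\bigr)$. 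Thus $\Phi'<0$ exactly when $\tfrac{\Lambda}{1+\Lambda}I(s)<\tfrac1{a(s)}$, i.e. $a(s)I(s)<1+\tfrac1\Lambda$; by the choice \eqref{Lambda} of $\Lambda$ we have $a(s)I(s)\le\sup_u a(u)I(u)<1/\Lambda<1+1/\Lambda$, so this holds. For $F(0)=0$: as $s\to0$, $I(s)\to\infty$, so $\Phi(s)\to\infty$ and $F(s)\to0$.

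For the inequality \eqref{lemma-2-R}, the strategy is the classical Ladyzhenskaya–Ural'tseva interpolation: control the left side by a product of a spatial $L^\infty_t$-in-$H$ norm and a space-time gradient term, with an exponent split governed by the Sobolev exponent $2+2j$ and the scaling relation \eqref{A-2}. Concretely, I would fix $\psi=\theta u$ (or $\theta G(u)$, then reconcile), apply \eqref{Gil-Nir} on each time slice to get $\|\psi(\tau)\|_{2+2j}^2\le S\|\nabla\psi(\tau)\|_2^2$, and then use Hölder's inequality in space on $\int_K G^2(u)$ after inserting powers of $H(u)$ so that the key estimate \eqref{A-2}, $(sF(s))^{\lambda/2}\le H(s)$ with $\Lambda+1=2/\lambda$, converts a power of $G^2(u)\le uF(u)$ (Remark \ref{G-F-2}) into the allowed power of $H(u)$. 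Integrating in time and applying Hölder in time with the conjugate exponents determined by $k$ and $j$ yields the stated $t^{1-(1+j)k}$ prefactor and the bracketed quantity raised to $1+jk$. The bookkeeping of exponents — verifying that the chosen $k=\Lambda/(\Lambda+j+j\Lambda)$ makes the Hölder pairings consistent and that the power of $H(u)$ produced is exactly $\le1$ so that $\sup_\tau\int\theta^2H(u(\tau))$ suffices — is where I expect the real work to lie; everything else is a routine chain of Hölder and Sobolev.

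The main obstacle is thus the exponent arithmetic in the interpolation, together with handling the cutoff $\theta$ cleanly: one must pass from $|\nabla(\theta u)|^2$ and $|\nabla(\theta G(u))|^2$ back and forth, absorbing lower-order terms involving $\nabla\theta$, and ensure the set $K\subset\{\theta=1\}$ is where the localization is lossless. I would organize the computation by first recording the pointwise bound $G^2(u)\le uF(u)$ and the consequence $G^2(u)^{\lambda/2}\le H(u)$ (a restatement of \eqref{A-2}), then doing the slice-wise Sobolev step, then the double Hölder (space then time), and finally collecting constants into $S^{k(1+j)}$.
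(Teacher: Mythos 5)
Your overall strategy coincides with the paper's: the monotonicity of $F$ is obtained exactly as in Lemma \ref{P-2} (differentiate $s\mapsto s^{\Lambda/(1+\Lambda)}I(s)$ and use $a(s)I(s)<\frac{\Lambda+1}{\Lambda}$, guaranteed by \eqref{Lambda}), and the inequality \eqref{lemma-2-R} is proved in the paper by the same Ladyzhenskaya--Ural'tseva interpolation you describe: a pointwise splitting of $G^2$ into a power of $G^{2(1+j)}$ and a power of $H$ via \eqref{A-2}, H\"older in space, the Sobolev inequality \eqref{Gil-Nir} applied slice-wise to $\theta G(u)$, H\"older in time, and the elementary bound $x^vy^w\leq (x+y)^{v+w}$.

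There are, however, two gaps. First, your argument for $F(0)=0$ is not valid as stated: from $I(s)\to\infty$ you conclude $\Phi(s)=\Lambda s^{\Lambda/(1+\Lambda)}I(s)\to\infty$, but $\Phi$ is a product of a factor tending to $0$ and a factor tending to $\infty$, so nothing follows without a rate. This is precisely where \eqref{a} and \eqref{Lambda} must be used quantitatively: with $\alpha=\limsup_{s\to0}a(s)I(s)$ one has $I(s)<-(\alpha+\epsilon)\,s\,I'(s)$ for small $s$, hence $\frac{d}{ds}\ln I(s)<-\frac{1}{(\alpha+\epsilon)s}$ and $I(s)\geq I(s_\epsilon)\left(\frac{s_\epsilon}{s}\right)^{1/(\alpha+\epsilon)}$; since $\alpha<\frac1\Lambda<\frac{\Lambda+1}{\Lambda}$ one may choose $\epsilon$ with $\frac{1}{\alpha+\epsilon}>\frac{\Lambda}{\Lambda+1}$, which forces $s^{\Lambda/(1+\Lambda)}I(s)\to\infty$. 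Without this step $F(0)=0$ is unproven. Second, on the main inequality you defer exactly the computation that makes the argument work: the identity $\lambda=\frac{2}{\Lambda+1}=\frac{2-2(1+j)k}{1-k}$, which legitimises the pointwise bound $G^{2}(u)\leq\left(G^{2(1+j)}(u)\right)^{k}H^{1-k}(u)$ and dictates the H\"older exponents $k$, $1-k$ in space and the resulting powers $1-(1+j)k$ and $1+jk$ after the time integration. A proposal that flags this as ``where the real work lies'' without carrying it out has not established \eqref{lemma-2-R}. Finally, the Sobolev step is applied to $\psi=\theta G(u)$ throughout; the $|\nabla(\theta u)|^2$ in the displayed statement is a misprint for $|\nabla(\theta G(u))|^2$ (as the subsequent use in Theorem \ref{Main-T} confirms), so no back-and-forth between the two gradients, nor any absorption of $\nabla\theta$ terms, is needed in this proposition.
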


Proposition \ref{parabolic_Sobolev} is one of the three auxiliary results we need for demonstrating a `finite propagation speed for solutions of \eqref{M-2}. The second one deals with the left-hand side of \eqref{weak}.

\begin{proposition}\label{cacciopoly}
Let $\eqref{a}$-\eqref{adecr}. Choose 
$\Lambda$ as in \eqref{Lambda} and $H,F,G$ as in \eqref{H-choice}, \eqref{F-result} and
\eqref{G-def}, respectively.

Then there exists $C\geq1$ such that
\begin{equation}
\nabla u\cdot \nabla\left( \theta^{2} F(u)\right) \geq
\frac{1}{2}| \nabla (\theta G(u))|^{2} - C G^{2}(u)\vert\nabla \theta \vert^{2}.
    \label{lemma-1-R}
\end{equation}
for any measurable $u$, $\nabla u$ and Lipschitz continuous $\theta$.
\end{proposition}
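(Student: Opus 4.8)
The plan is to expand the left-hand side by the Leibniz rule and then massage the cross term using the pointwise algebraic relations among $F$, $G$ and $H$ established in Remark \ref{new-def}. Writing $\nabla(\theta^2 F(u)) = 2\theta\nabla\theta\,F(u) + \theta^2 F'(u)\nabla u$, we get
\begin{align*}
\nabla u\cdot\nabla\!\left(\theta^2 F(u)\right)
= \theta^2 F'(u)\,|\nabla u|^2 + 2\theta F(u)\,\nabla u\cdot\nabla\theta.
\end{align*}
Since $G'(u)=\sqrt{F'(u)}$, the first term is $|\theta\sqrt{F'(u)}\nabla u|^2 = |\theta\nabla G(u)|^2$. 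On the other hand $\nabla(\theta G(u)) = \theta\nabla G(u) + G(u)\nabla\theta$, so $|\nabla(\theta G(u))|^2 = \theta^2|\nabla G(u)|^2 + 2\theta G(u)\,\nabla G(u)\cdot\nabla\theta + G^2(u)|\nabla\theta|^2$. Substituting, the target reduces to controlling the difference
\begin{align*}
2\theta\,\nabla u\cdot\nabla\theta\,\Big(F(u) - \tfrac12 G(u)\sqrt{F'(u)}\Big)
\end{align*}
from below by $-CG^2(u)|\nabla\theta|^2$ plus a harmless positive multiple of $|\theta\nabla G(u)|^2$. In other words, after the reduction, it suffices to show that the ``defect'' quantity $G(u)\sqrt{F'(u)}$ is comparable to $F(u)$, i.e.\ there is a constant depending only on $\Lambda$ (equivalently on the structural constants $c,\mu$ in \eqref{adecr}) such that $\sqrt{F'(u)}\,G(u) \le \text{const}\cdot F(u)$, or more precisely that the mismatch is absorbable by Cauchy--Schwarz / Young's inequality with the two available good terms $\theta^2|\nabla G(u)|^2$ and $G^2(u)|\nabla\theta|^2$.

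The heart of the matter is therefore an estimate of the form $u F'(u) \le \text{const}\cdot F(u)$ (a ``doubling''-type bound on $F$), since combined with $G(u)\le\sqrt{uF(u)}$ from Remark \ref{G-F-2} this yields $G(u)\sqrt{F'(u)} \le \sqrt{uF'(u)}\sqrt{F(u)} \le \text{const}\cdot F(u)$. To get $uF'(u)\lesssim F(u)$ I would differentiate the explicit expression \eqref{F-result}, $F(s) = s^{-1}H^{\Lambda+1}(s)$ with $H^{\Lambda+1}(s) = [\Lambda I(s)]^{-1-1/\Lambda}$ and $I'(s) = -1/(sa(s))$. A direct computation gives
\begin{align*}
\frac{uF'(u)}{F(u)} = -1 + (\Lambda+1)\frac{u h(u)}{H(u)} = -1 + (\Lambda+1)\,\frac{H^{\Lambda+1}(u)}{H(u)\,a(u)}\cdot\frac1{?}\,,
\end{align*}
and using $h = H^{\Lambda+1}/(s a)$ from \eqref{h-ret} one sees $u h(u)/H(u) = H^{\Lambda}(u)/a(u) = [\Lambda I(u)]^{-1}\cdot\frac1{a(u)I(u)}$. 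By hypothesis \eqref{a}, $a(u)I(u)$ is bounded; and the constraint \eqref{Lambda} on $\Lambda$, namely $\Lambda\sup_u a(u)I(u) < 1$, is exactly what forces $(\Lambda+1)\frac{uh(u)}{H(u)} = \frac{\Lambda+1}{\Lambda}\cdot\frac1{a(u)I(u)}\cdot\big(\Lambda a(u)I(u)\big)$ — one needs to check this quotient stays in a bounded range, in fact it equals $\frac{\Lambda+1}{\Lambda a(u)I(u)}\,\big/\,I(u)$... this is where I would slow down and use \eqref{adecr} to control the ratio $a(v)I(v)/(a(u)I(u))$ uniformly, securing $0 \le uF'(u)/F(u) \le$ const.

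The main obstacle, then, is precisely this structural inequality $uF'(u)\le CF(u)$ with $C$ uniform in $u\in(0,\infty)$: the difficulty is that $F$ from \eqref{F-result} need not be monotone or vanish at $0$ a priori, so one cannot argue by convexity or by a crude bound near $0$; the quantitative decrease hypothesis \eqref{adecr} (with exponent $\mu$) together with the smallness constraint \eqref{Lambda} on $\Lambda$ must be invoked together to pin down both the sign of $F'$ (the claim $F'>0$, which Proposition \ref{parabolic_Sobolev} also asserts) and the size of $uF'/F$. Once that bound is in hand, the rest is a routine Young's inequality: choose $\varepsilon>0$ small, write $2\theta|\nabla u\cdot\nabla\theta|\cdot|F(u)-\tfrac12 G(u)\sqrt{F'(u)}| \le \varepsilon\,\theta^2 F'(u)|\nabla u|^2 + \varepsilon^{-1}C' G^2(u)|\nabla\theta|^2$, absorb the first term into $|\theta\nabla G(u)|^2$ (which exceeds $\tfrac12|\nabla(\theta G(u))|^2 - G^2(u)|\nabla\theta|^2$ up to the cross term, itself handled by another Young step), and collect constants into a single $C\ge1$. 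I would present the algebraic reduction cleanly first, isolate the one structural lemma $uF'(u)\le CF(u)$, prove it from \eqref{H-choice}, \eqref{a}, \eqref{adecr}, \eqref{Lambda}, and finish with the elementary Young's-inequality bookkeeping.
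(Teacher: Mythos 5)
Your expansion of the left-hand side and the plan to finish with Young's inequality match the paper's Lemma \ref{lemma-1}, but the structural inequality you isolate as ``the heart of the matter'' points in the wrong direction. After writing the cross term as $2\tfrac{F(u)}{G'(u)}\,\theta\nabla G(u)\cdot\nabla\theta$ and applying Young's inequality, the price you pay is $\varepsilon^{-1}\bigl(F(u)/G'(u)\bigr)^2|\nabla\theta|^2$, and for this to be dominated by $C\,G^2(u)|\nabla\theta|^2$ you need $F\le c\,G G'$ --- an \emph{upper} bound on $F/(GG')$. This is exactly condition \eqref{A-1}, which the paper's Lemma \ref{lemma-1} takes as input and its Proposition \ref{P-3} derives from \eqref{adecr}. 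What you set out to prove, $G\sqrt{F'}\le\mathrm{const}\cdot F$, is the \emph{reverse} inequality ($GG'\le cF$) and cannot absorb the cross term. The confusion is structural: the bound $G(u)\le\sqrt{uF(u)}$ of Remark \ref{G-F-2} is just Cauchy--Schwarz and only ever yields upper bounds on $G$, whereas the proposition needs a \emph{lower} bound on the integral $G(s)=\int_0^s\sqrt{F'(t)}\,dt$, namely $G\ge F/(cG')$.

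Moreover, the ``doubling'' lemma $uF'(u)\le CF(u)$ on which you hang everything is false under the paper's hypotheses. From \eqref{F-result} and \eqref{h-ret} one computes $sF'(s)/F(s)=\frac{\Lambda+1}{\Lambda\,a(s)I(s)}-1$; condition \eqref{a} bounds $a(s)I(s)$ from \emph{above}, not below, so this ratio need not be bounded: for $a(s)=\exp(-s^{-\beta})$ (the second example in Section \ref{Exmp-P-F}) one has $a(s)I(s)\to0$ as $s\to0$, hence $sF'(s)/F(s)\to\infty$. The correct substitute is the paper's Proposition \ref{P-3}: using \eqref{R-1}--\eqref{R-2}, write $F/(GG')$ as a quotient whose denominator is the integral $\int_0^s t^{-1}[I(t)]^{-\frac{1}{2\Lambda}-1}[a(t)]^{-\frac12}\,dt$, apply Cauchy's mean value theorem, and invoke the quantified monotonicity \eqref{adecr} to bound the resulting ratio $\bigl[a(s)I^{\mu}(s)/(a(t)I^{\mu}(t))\bigr]^{1/2}$, $0<t<s$, by $c^{-1/2}$. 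That lower bound on $G$ is the real content of the proposition and is absent from your sketch; the Young-inequality bookkeeping you describe at the end is, as you say, routine once \eqref{A-1} is in hand.
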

The last one is the celebrated Ladyzhenskaya-Uraltceva iteration lemma (see
 \cite{LU} Ch 2. Lemma 4.7).
\begin{lemma}\label{Lady-lemma}
Let sequence $y_n$ for $n=0,1,2,...$, be nonnegative sequence satisfying the recursion inequality,
$ \displaystyle  y_{n+1}\leq c\text{ }b^n \text{ }y_n^{1+\delta} $ with some constants $ c ,\delta > 0 \text{ and } b\geq 1$. Then
\[ \displaystyle y_n \leq c^{\frac{(1+\delta)^n -1}{\delta}}\text{ } b^{\frac{(1+\delta)^n -1}{\delta^2} -\frac{n}{\delta}}\text{ }y_0^{(1+\delta)^n}.\]
In particular $ \displaystyle \text{if} \quad y_0 \leq \theta_L = c^\frac{-1}{\delta} \text{ } b^\frac{-1}{\delta^2} \text{ and } {b > 1}$, then  $y_n \leq \theta\text{ } b^{\frac{-n}{\delta}}$
and consequently, \[ \displaystyle y_n \rightarrow 0 \text{ when } n\rightarrow \infty.\]
\end{lemma}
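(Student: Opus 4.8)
The natural route is induction on $n$: first establish the closed-form bound, then specialise it. For $n=0$ the asserted inequality reads $y_0\le c^{0}b^{0}y_0^{1}$, an equality. Suppose the bound holds for some $n\ge0$. Since every $y_k\ge0$ and $r\mapsto r^{1+\delta}$ is nondecreasing on $[0,\infty)$, the recursion together with the inductive hypothesis gives
\[
y_{n+1}\le c\,b^{n}y_n^{1+\delta}
\le c\,b^{n}\Bigl(c^{\frac{(1+\delta)^n-1}{\delta}}\,b^{\frac{(1+\delta)^n-1}{\delta^2}-\frac n\delta}\,y_0^{(1+\delta)^n}\Bigr)^{1+\delta}.
\]

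It remains to collect the three exponents and check they match the claim at step $n+1$. The exponent of $y_0$ is $(1+\delta)\cdot(1+\delta)^n=(1+\delta)^{n+1}$. For $c$ the exponent is $1+(1+\delta)\frac{(1+\delta)^n-1}{\delta}=\frac{\delta+(1+\delta)^{n+1}-(1+\delta)}{\delta}=\frac{(1+\delta)^{n+1}-1}{\delta}$, using $(1+\delta)\cdot(1+\delta)^n=(1+\delta)^{n+1}$ (this is the geometric identity $1+q\frac{q^n-1}{q-1}=\frac{q^{n+1}-1}{q-1}$ with $q=1+\delta$). For $b$ the exponent is $n+(1+\delta)\bigl(\frac{(1+\delta)^n-1}{\delta^2}-\frac n\delta\bigr)=n+\frac{(1+\delta)^{n+1}-(1+\delta)}{\delta^2}-\frac{n(1+\delta)}{\delta}$; since $n-\frac{n(1+\delta)}{\delta}=-\frac n\delta$ and $\frac{(1+\delta)^{n+1}-(1+\delta)}{\delta^2}=\frac{(1+\delta)^{n+1}-1}{\delta^2}-\frac1\delta$, this collapses to $\frac{(1+\delta)^{n+1}-1}{\delta^2}-\frac{n+1}{\delta}$. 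This is precisely the asserted bound for $n+1$, closing the induction.

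For the ``in particular'' statement, substitute $y_0\le\theta_L=c^{-1/\delta}b^{-1/\delta^2}$ into the closed form. The $c$-exponent becomes $\frac{(1+\delta)^n-1}{\delta}-\frac{(1+\delta)^n}{\delta}=-\frac1\delta$, and the $b$-exponent becomes $\frac{(1+\delta)^n-1}{\delta^2}-\frac n\delta-\frac{(1+\delta)^n}{\delta^2}=-\frac1{\delta^2}-\frac n\delta$, so $y_n\le c^{-1/\delta}b^{-1/\delta^2}b^{-n/\delta}=\theta_L\,b^{-n/\delta}$. Since $b>1$ and $\delta>0$ we have $b^{-n/\delta}\to0$, hence $y_n\to0$ as $n\to\infty$.

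\textbf{Main obstacle.} There is no deep difficulty: the only point needing care is that the monotonicity step $y_n^{1+\delta}\le(\text{bound})^{1+\delta}$ relies on the inductive bound \emph{and} nonnegativity of both sides, which is automatic; the rest is the exponent bookkeeping carried out above through the finite geometric-series identity with ratio $q=1+\delta$.
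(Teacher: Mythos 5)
Your induction is correct: the base case is an identity, the exponent bookkeeping for $c$, $b$, and $y_0$ checks out via the geometric identity with ratio $1+\delta$, and the specialisation $y_0\le\theta_L=c^{-1/\delta}b^{-1/\delta^2}$ does collapse the bound to $\theta_L\,b^{-n/\delta}\to0$. Note that the paper itself offers no proof of this lemma --- it is quoted verbatim from Ladyzhenskaya--Ural'tseva (Ch.~2, Lemma~4.7 of the cited monograph) --- so there is nothing to compare against; your argument is the standard one given there, and it is complete as written.
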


By the end of the section we will be proving and commenting Propositions \ref{parabolic_Sobolev} and \ref{cacciopoly}. The reader not interested in the technique can pass to the next section.

The proofs will be split into several lemmas, to facilitate the reading.

\begin{lemma}\label{P-2}
Let \eqref{a} hold. Choose $\Lambda$ as in \eqref{Lambda} and $F$ as in \eqref{F-result}. Then $F' > 0$, $F(0)=0$.
\end{lemma}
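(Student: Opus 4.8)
The plan is to work directly from the explicit formula \eqref{F-result}, namely $F(s)=\bigl(\Lambda s^{\Lambda/(1+\Lambda)}I(s)\bigr)^{-(1+\Lambda)/\Lambda}$, and to show both that $F(s)\to0$ as $s\to0$ and that $F$ is strictly increasing (which gives $F'>0$ after we check differentiability). For the behaviour at $0$, recall from the remarks after Hypothesis \ref{tau} that $I(u)\to\infty$ as $u\to0$ while $aI$ is bounded; the factor $s^{\Lambda/(1+\Lambda)}$ tends to $0$, so I need to see that $s^{\Lambda/(1+\Lambda)}I(s)\to\infty$, equivalently that the power of $s$ does not kill the logarithmic-type blow-up of $I$. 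Since $F(s)=\tfrac1s H^{\Lambda+1}(s)$ and $H(s)\to0$ as $s\to0$ (Remark \ref{new-def}), the cleanest route is to use this identity: it suffices to show $H^{\Lambda+1}(s)=o(s)$, i.e. $H(s)=o(s^{1/(\Lambda+1)})$, which is immediate once we know $H(s)=\bigl(\Lambda I(s)\bigr)^{-1/\Lambda}$ decays like a power of $\tfrac1{I(s)}$ and $I(s)\to\infty$ faster than any power of $\log(1/s)$ — this last point follows from $I(s)\ge\int_s^{s_0}\frac{dv}{v\,a(v)}\ge \frac1{\sup(av)(\cdot)}\log(s_0/s)$ on a neighbourhood where $aI$ (hence $a$) is bounded. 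So $F(0+)=0$.

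For monotonicity and $F'>0$, I would differentiate. Writing $F=\tfrac1s H^{\Lambda+1}$ and using $H'=h$, $h(s)=\frac1{sa(s)}H^{\Lambda+1}(s)$ from \eqref{h-ret}, one computes
\[
F'(s)=-\frac1{s^2}H^{\Lambda+1}(s)+\frac{\Lambda+1}{s}H^{\Lambda}(s)h(s)
=\frac{H^{\Lambda+1}(s)}{s^2}\left(-1+\frac{(\Lambda+1)H^{\Lambda}(s)}{a(s)}\cdot\frac{H^{\Lambda+1}(s)}{H^{\Lambda}(s)\,s}\right)\!,
\]
wait — more carefully, substituting $h(s)=\frac1{sa(s)}H^{\Lambda+1}(s)$ gives
\[
F'(s)=\frac{H^{\Lambda+1}(s)}{s^2}\left(\frac{(\Lambda+1)H^{\Lambda+1}(s)}{s\,a(s)}-1\right).
\]
Thus $F'(s)>0$ if and only if $(\Lambda+1)\,\frac{H^{\Lambda+1}(s)}{s\,a(s)}>1$, i.e. $(\Lambda+1)F(s)>a(s)$, i.e. $(\Lambda+1)H^{\Lambda+1}(s)>s\,a(s)$. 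To verify this I would exploit the defining relation $H^{\Lambda+1}$ is, up to the constant $\Lambda^{-(\Lambda+1)/\Lambda}$, equal to $I(s)^{-(\Lambda+1)/\Lambda}$; differentiating $I$ ($I'(s)=-\frac1{sa(s)}$) and comparing should reduce the desired inequality to the constraint $\Lambda<1/\sup_u(a(u)I(u))_+$ from \eqref{Lambda}. Concretely, set $\phi(s)=s\,a(s)/H^{\Lambda+1}(s)$; then $\phi(s)=s\,a(s)\,\Lambda^{(\Lambda+1)/\Lambda}I(s)^{(\Lambda+1)/\Lambda}$, and one checks $\phi$ against the bound $a(s)I(s)\le \sup_u(aI)_+ =: A$, giving $\phi(s)\le \Lambda^{(\Lambda+1)/\Lambda}A\,(sI(s)^{1/\Lambda})$ — so the remaining task is to control $sI(s)^{1/\Lambda}$; since $I(s)\to\infty$ but only logarithmically while $s\to0$, this product tends to $0$ at $0$, and at the other end \eqref{infty} controls it, so $\phi<\Lambda+1$ after possibly shrinking $\Lambda$ within the admissible range.

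The main obstacle I anticipate is precisely this last uniform comparison: showing $(\Lambda+1)F(s)>a(s)$ for \emph{all} $s>0$, not just near $0$ and near $\infty$. The bound $aI\le A$ is uniform, so the cleanest argument is likely to rewrite the inequality purely in terms of the single function $J(s):=a(s)I(s)$ and a derivative identity for $I$, turning "$F'>0$ everywhere" into a first-order differential inequality for $I$ that holds whenever $\Lambda A<1$; I would spend the bulk of the proof making that reduction precise and checking the endpoint limits with \eqref{a} and \eqref{infty}. Differentiability of $F$ on $(0,\infty)$ is free since $a\in C(0,\infty)$, $a>0$ there, and $I$ is $C^1$ with $I'=-1/(sa)$, so $F$ is a composition of $C^1$ functions with non-vanishing arguments.
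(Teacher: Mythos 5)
There are two genuine gaps in your proposal, and both stem from a wrong picture of how fast $I$ blows up at $0$. For $F(0)=0$ you correctly reduce to $H^{\Lambda+1}(s)=o(s)$, i.e.\ $I(s)^{-(\Lambda+1)/\Lambda}=o(s)$, i.e.\ $I(s)\gg s^{-\Lambda/(\Lambda+1)}$ --- a genuine \emph{power-law} lower bound on $I$. But your justification offers only $I(s)\gtrsim\log(s_0/s)$, plus the assertion that $I$ grows ``faster than any power of $\log(1/s)$'', which neither follows from that bound nor would suffice: take $I(s)=e^{\sqrt{\log(1/s)}}$, which beats every power of the logarithm and yet $I^{-(\Lambda+1)/\Lambda}$ is not $o(s)$. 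The missing step --- the heart of the paper's proof --- is to integrate the differential inequality coming from \eqref{a}: with $\alpha=\limsup_{s\to0}a(s)I(s)$ and $I'(s)=-1/(sa(s))$ one gets $(\ln I)'(s)<-\tfrac{1}{(\alpha+\epsilon)s}$ near $0$, hence $I(s)\geq I(s_\epsilon)(s_\epsilon/s)^{1/(\alpha+\epsilon)}$, and the choice \eqref{Lambda} guarantees $\tfrac{1}{\alpha+\epsilon}>\tfrac{\Lambda}{\Lambda+1}$, which is exactly what makes $s^{\Lambda/(\Lambda+1)}I(s)\to\infty$. (Your statement that $I$ blows up ``only logarithmically'' is false under \eqref{a}; already for $a(s)=s^\beta$ one has $I(s)=\beta^{-1}s^{-\beta}$.)

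Second, your derivative computation contains an algebra slip that manufactures the ``main obstacle'' you then cannot overcome. From $F=H^{\Lambda+1}/s$ and $h=H^{\Lambda+1}/(sa)$ the correct identity is
\[
F'(s)=\frac{H^{\Lambda+1}(s)}{s^{2}}\left(\frac{(\Lambda+1)H^{\Lambda}(s)}{a(s)}-1\right),
\]
not the factor $\tfrac{(\Lambda+1)H^{\Lambda+1}(s)}{s\,a(s)}$ you wrote (you gained a spurious $H(s)/s$). Since $H^{\Lambda}=(\Lambda I)^{-1}$, the positivity of $F'$ is equivalent to $a(s)I(s)<\tfrac{\Lambda+1}{\Lambda}$ for \emph{all} $s$, which is uniform and immediate from \eqref{Lambda} (with $\sup_u a(u)I(u)<\infty$ coming from \eqref{a}, \eqref{infty} and continuity). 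There is no quantity $sI(s)^{1/\Lambda}$ to control, and ``shrinking $\Lambda$'' would in any case not prove the lemma for every admissible $\Lambda$; indeed your condition $(\Lambda+1)F(s)>a(s)$ is equivalent to $(\Lambda+1)h(s)>1$ and actually fails near $0$ in Example \ref{Ex-1}. The paper reaches the same uniform condition by noting that $F$ increases iff $s\mapsto s^{\Lambda/(1+\Lambda)}I(s)$ decreases and differentiating that product, so your overall strategy is sound; but as written the monotonicity half rests on a miscomputation and the $F(0)=0$ half is missing its essential estimate.
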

\begin{proof}
Note that \eqref{Lambda} implies
\[
\frac{\Lambda + 1}\Lambda > \sup\limits_u a(u)I(u).
\]

Let $F$ be as in \eqref{F-result}. We show that $F$ is increasing. It is equivalent to demonstrating that function
\begin{equation*}s \mapsto \displaystyle{ s^{\frac{\Lambda}{1+\Lambda}}I(s)},
\end{equation*}
decreases. And so it is since
\begin{align*}
\frac{d}{ds}\left( s^{\frac{\Lambda}{1+\Lambda}}I(s)\right)
=
\left( \frac{\Lambda}{\Lambda+1} \frac{1}{a(s)}\right)\left( a(s)I(s) -\frac{\Lambda+1}{\Lambda}\right)s^{-\frac{1}{\Lambda+1}}<0.
\end{align*}

Next, we show that
$s^{\frac{\Lambda}{1+\Lambda}}I(s) \rightarrow \infty$ as $s\rightarrow 0$, which implies $\displaystyle \lim_{s \rightarrow 0}F(s) = 0$.
By \eqref{a}, $\limsup\limits_{s\to0}a(s)I(s)=\alpha<\infty$. Hence, for every $\epsilon > 0 $ there exists $s_{\epsilon} \in (0,M]$ such that $ a(s)I(s) < \alpha+\epsilon$ for $s\in(0,s_\epsilon)$. This yields the following inequalities.
\begin{align*}
 I(s) < (\alpha + \epsilon)\frac{1}{a(s)}  &  = -(\alpha+\epsilon)s I^{'}(s),\\
    \frac{d}{ds} \ln I(s)  & < -\frac{1}{\alpha+\epsilon} \cdot \frac{1}{s},\\
I(s) & >  I(s_{\epsilon})\left(\frac{s_{\epsilon}}{s}\right)^{\frac{1}{\alpha+\epsilon}} \text{ for }  0 < s < s_\epsilon, \\
s^{\frac{\Lambda}{\Lambda+1}}I(s) & \geq
I(s_{\epsilon})s_{\epsilon}^{\frac{1}{\alpha+\epsilon}}\times s^{\frac{\Lambda}{\Lambda+1}-\frac{1}{\alpha+\epsilon}}.
\end{align*}
Since $\frac{\Lambda +1}\Lambda > \max PI \geq \alpha$, one can choose $\epsilon>0$ such that $\frac{\Lambda +1}{\Lambda} > \alpha + \epsilon \implies \frac{\Lambda}{\Lambda+1}-\frac{1}{\alpha+\epsilon}<0$. Hence $
 s^{\frac{\Lambda}{\Lambda+1}}I(s) \to \infty$ as $s\to 0.$
\end{proof}

\begin{lemma}\label{lemma-2}
Inequality \ref{A-2} implies \eqref{lemma-2-R}.
\end{lemma}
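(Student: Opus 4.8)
The goal is to show that the algebraic inequality $\bigl(sF(s)\bigr)^{\lambda/2}\le H(s)$, i.e. \eqref{A-2}, together with the Sobolev inequality \eqref{Gil-Nir}, yields the parabolic estimate \eqref{lemma-2-R}. I would proceed by the classical interpolation argument for degenerate parabolic equations. First I would apply \eqref{Gil-Nir} to $\psi=\theta u$ on each time slice and integrate in $t$, obtaining control of $\int_0^t\|\theta u\|_{L^{2+2j}}^2\,d\tau$ by $\int_0^t\int_\Omega|\nabla(\theta u)|^2\,dx\,d\tau$. Then I would interpolate the space-time $L^{2+2j}$-norm of $\theta u$ (in space) against the $L^\infty_t L^1_x$ (or $L^\infty_t L^2_x$) bound coming from the $\sup_\tau\int_\Omega\theta^2H(u(\tau))\,dx$ term; the standard Ladyzhenskaya-type interpolation gives a mixed space-time norm with exponent $2(1+j)$ or similar, at the cost of the two bracketed quantities raised to a power $1+jk$ and a factor $t^{1-(1+j)k}$ from Hölder in time, which matches the shape of the right-hand side of \eqref{lemma-2-R}.

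The place where \eqref{A-2} enters is in relating $G^2(u)$ on the left of \eqref{lemma-2-R} to powers of $u$ and of $H(u)$. By Remark \ref{G-F-2}, $G(u)\le\sqrt{uF(u)}=\bigl(uF(u)\bigr)^{1/2}$, so $G^2(u)\le uF(u)$. Meanwhile \eqref{A-2} reads $\bigl(uF(u)\bigr)^{\lambda/2}\le H(u)$, equivalently $uF(u)\le H^{2/\lambda}(u)=H^{1+\Lambda}(u)$. Raising the trivial bound and combining, one gets $G^2(u)\le\bigl(uF(u)\bigr)^{1-\beta}\bigl(uF(u)\bigr)^{\beta}$ split so that the first factor is estimated by a power of $u$ (to be absorbed into the $\|\nabla(\theta u)\|_{L^2}$-controlled term via Sobolev) and the second by a power of $H(u)$ (absorbed into the $\sup_\tau\int\theta^2H\,dx$ term). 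Choosing $\beta$ so that the exponents match $k$ and $1+jk$ is exactly the bookkeeping that produces the constants $j=\tfrac2{N-2}$ and $k=\tfrac\Lambda{\Lambda+j+j\Lambda}$ in the statement; I would carry this out on the set $K\subset\{\theta=1\}$, where $\theta u=u$, so that $G(u)=G(\theta u)$ there.

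Concretely, the chain I expect is: on $K$, $G^2(u)\le uF(u)\le u\cdot\bigl(H^{1+\Lambda}(u)/u\bigr)$ — wait, more carefully, from $uF(u)\le H^{1+\Lambda}(u)$ write $G^2(u)\le H^{1+\Lambda}(u)$ is too crude; instead interpolate $G^2(u)=\bigl(G^2(u)\bigr)^{1-k}\bigl(G^2(u)\bigr)^{k}\le u^{1-k}F^{1-k}(u)\cdot H^{(1+\Lambda)k}(u)$ and then use $F^{1-k}(u)\le$ const (boundedness of $u$ and continuity of $F$, or rather bound $uF(u)$ suitably) to reduce to $\int_0^t\int_K u^{2(1-k)}\cdot(\text{bdd})\cdot H^{(1+\Lambda)k}(u)$; Hölder in $(x,\tau)$ with the Sobolev-interpolated norm of $u$ on one factor and $\sup_\tau\int_\Omega\theta^2H(u(\tau))$ on the other then closes it. The main obstacle will be pinning down the precise exponents so that the three Hölder/interpolation exponents are simultaneously admissible and reproduce exactly $t^{1-(1+j)k}$ and the power $1+jk$ — this is a purely computational but delicate juggling of fractions, and one must be careful that the power of $t$ is nonnegative (equivalently $(1+j)k\le1$) for the estimate to be useful as $t\to0$, which is where the smallness of $\Lambda$ from \eqref{Lambda} is implicitly used.
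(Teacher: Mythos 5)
Your overall strategy (interpolate $G^2(u)$ between a Sobolev-controlled quantity and the $H$-mass, then H\"older in space and in time) is the right one, but two of your concrete choices would not close. First, the Sobolev inequality \eqref{Gil-Nir} must be applied to $\psi=\theta G(u)$, not to $\psi=\theta u$: the quantity this lemma is designed to feed into (and the one the energy estimate \eqref{lemma-1-R} actually controls) is $\int_0^t\int_\Omega|\nabla(\theta G(u))|^2$, and indeed the gradient term in the displayed statement of \eqref{lemma-2-R} should read $|\nabla(\theta G(u))|^2$ --- that is what is proved and what is used later in \eqref{fsp-3}. Working with $\theta u$ you would need to dominate a power of $G(u)$ by the same power of $u$, which nothing in the hypotheses provides. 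Second, your exponent split $G^2=(G^2)^{1-k}(G^2)^k\le (uF(u))^{1-k}H^{(1+\Lambda)k}(u)$ does not reproduce the claimed powers: the $H$-factor must carry exponent exactly $1-k$, so that after H\"older in $x$ with exponents $1/k$ and $1/(1-k)$ it becomes $\bigl(\int\theta^2H\bigr)^{1-k}$, whereas $(1+\Lambda)k\ne 1-k$ in general; moreover the leftover factor $(uF(u))^{1-k}$ forces you to invoke boundedness of $F$ on the range of $u$, producing a constant depending on $\|u\|_\infty$ that is absent from \eqref{lemma-2-R}.

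The decomposition that works is a one-line algebraic identity using \eqref{A-2} exactly once: with $\lambda=\tfrac{2}{\Lambda+1}$ one checks $\lambda=\tfrac{2-2(1+j)k}{1-k}$, hence $G^2(u)=G^{2(1+j)k}(u)\,\bigl(G^{\lambda}(u)\bigr)^{1-k}\le G^{2(1+j)k}(u)\,H^{1-k}(u)$, since $G(u)\le\sqrt{uF(u)}$ by Remark \ref{G-F-2} and $(uF(u))^{\lambda/2}\le H(u)$ by \eqref{A-2}. On $K$ one inserts $\theta$, applies H\"older in $x$ with exponents $1/k$ and $1/(1-k)$, then \eqref{Gil-Nir} with $\psi=\theta G(u)$, pulls the $H$-integral out as a supremum in time, applies H\"older in $t$ to $\bigl(\int_\Omega|\nabla(\theta G(u))|^2\bigr)^{(1+j)k}$ (note $(1+j)k<1$ holds automatically for every $\Lambda>0$, so no smallness of $\Lambda$ is needed here, contrary to your closing remark --- condition \eqref{Lambda} is used in Lemma \ref{P-2}, not in this step), and finishes with the elementary inequality $x^vy^w\le(x+y)^{v+w}$ for $x,y,v,w>0$.
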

\begin{proof}
Let $\lambda = \frac2{\Lambda + 1}$.
By Remark \ref{G-F-2} and \eqref{A-2},  $G(s)^{\lambda}\leq H(s)$. Note that
\[
\lambda = \frac{2 - 2(1+j)k}{1-k}.
\]
So
\begin{align*}
G^{2} (u) \leq  G^{2(1+j)k}(u)H^{1-k}(u). 
\end{align*}
Integrate both side of the latter over $ K \times (0,t)$ to obtain the following.
\begin{align*}
\int_{0}^{t} \int_{K} G^{2}(u) dxdt
\leq &
\  \int_{0}^{t}\int_{K}G^{2(1+j)k}(u)H^{1-k}(u) dxd\tau \nonumber \\
\leq &
\ \int_{0}^{t}\int_{K} \left(|\theta G(u)|^{2(1+j)}\right)^{k}
\left( \theta^{2}H(u)\right)^{1-k} dx d\tau\\
\leq &
\ \int_{0}^{t}
    \left[\int_{\Omega} |\theta G(u)|^{2(1+j)} dx\right ]^{k}
\left[ \int_{\Omega}\theta^{2}H(u) dx\right]^{1-k}   d\tau\\
\leq&
\ S^{k(1+j)}
 \int_{0}^{t}
    \left[\int_{\Omega} |\nabla(\theta G(u))|^{2} dx\right]^{(1+j)k}d\tau \left[\sup_{0\leq \tau \leq t}\int_{\Omega}\theta^{2}H(u) dx\right]^{1-k}, \nonumber 
\end{align*}
by \eqref{Gil-Nir}.  Note the inequality $ x^{v}y^{w}\leq (x+y)^{v+w} \ ; \ x,y,v,w > 0 $. Indeed
\[
x^{\frac v{v+w}}y^{\frac w{v+w}} \leq
\frac v{v+w} x + \frac w{v+w} y 
\leq \frac{\max\{v,w\}}{v+w} (x+y)<x+y
\]
We apply these together with the Holder inequality for time integral, to get the following
\begin{align}
\int_{0}^{t} \int_{K}  G^{2}(u) & dxdt \nonumber   \leq    S^{k(1+j)} \left[\sup_{0\leq \tau \leq t}\int_{\Omega}\theta^{2}H(u) dx\right]^{1-k} t^{1-k(1+j)}
 \left[\int_{0}^{t} \int_{\Omega} |\nabla(\theta G(u))|^{2} dx  d\tau\right]^{(1+j)k} \nonumber \\
& \leq
 S^{k(1+j)} t^{1-k(1+j)} \left[\sup_{0\leq \tau \leq t}\int_{\Omega}\theta^{2}H(u) dx + \int_{0}^{t} \int_{\Omega} |\nabla(\theta G(u))|^{2} dx  d\tau\right]^{1+jk}. \nonumber
\end{align}
\end{proof}

The core of the proof of Proposition \ref{cacciopoly} is the following lemma.
\begin{lemma} \label{lemma-1}
Assume that there exists $c>0$ such that
\begin{equation}\label{A-1}
    F(s)\leq cG^{'}(s)G(s)
\end{equation}
Then the assertion of Proposition \ref{cacciopoly} holds.
\end{lemma}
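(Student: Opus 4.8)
The plan is to establish \eqref{lemma-1-R} pointwise almost everywhere, as a purely algebraic inequality built from the chain rule, the hypothesis \eqref{A-1}, and a single calibrated application of Young's inequality; no integration, and hence no properties of $\theta$ beyond Lipschitz continuity, will be needed.

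First I would expand the left-hand side by the product and chain rules,
\[
\nabla u\cdot\nabla\!\left(\theta^{2}F(u)\right)
= \theta^{2}F'(u)\,|\nabla u|^{2} + 2\theta F(u)\,\nabla u\cdot\nabla\theta .
\]
Since $G'=\sqrt{F'}$, one has $\nabla G(u)=G'(u)\nabla u$ a.e.\ (with the usual convention on $\{\nabla u=0\}$; this is legitimate because $G$ is locally Lipschitz, $F'$ being locally bounded), so the first term equals $\theta^{2}|\nabla G(u)|^{2}$, the \emph{good} term. In parallel I would expand
\[
\bigl|\nabla(\theta G(u))\bigr|^{2}
= \theta^{2}|\nabla G(u)|^{2} + 2\theta\,G(u)G'(u)\,\nabla u\cdot\nabla\theta + G^{2}(u)\,|\nabla\theta|^{2}.
\]
Subtracting one half of the second identity from the first, the quadratic-in-$\nabla u$ terms partially cancel, leaving exactly half of the good term:
\[
\nabla u\cdot\nabla\!\left(\theta^{2}F(u)\right) - \tfrac12\bigl|\nabla(\theta G(u))\bigr|^{2}
= \tfrac12\theta^{2}|\nabla G(u)|^{2}
+ \theta\,\nabla u\cdot\nabla\theta\,\bigl[2F(u)-G(u)G'(u)\bigr]
- \tfrac12 G^{2}(u)\,|\nabla\theta|^{2}.
\]

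It then remains to show that the surviving $\tfrac12\theta^{2}|\nabla G(u)|^{2}$ absorbs the cross term, and this is precisely where \eqref{A-1} enters. Recalling $F,G,G'\ge0$ (Remark \ref{G-F-2}) and $G'(u)|\nabla u|=|\nabla G(u)|$, and using $|2F(u)-G(u)G'(u)|\le 2F(u)+G(u)G'(u)\le(2c+1)\,G'(u)G(u)$, the cross term is bounded below by $-(2c+1)\,\theta|\nabla G(u)|\,G(u)|\nabla\theta|$. Young's inequality $ab\le\tfrac12a^{2}+\tfrac12b^{2}$ with $a=\theta|\nabla G(u)|$, $b=(2c+1)G(u)|\nabla\theta|$ then lower-bounds this by $-\tfrac12\theta^{2}|\nabla G(u)|^{2}-\tfrac{(2c+1)^{2}}{2}G^{2}(u)|\nabla\theta|^{2}$. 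Substituting back, the good term is completely consumed and one obtains \eqref{lemma-1-R} with the explicit constant $C=\tfrac12\bigl((2c+1)^{2}+1\bigr)$, which is $\ge1$ since $c>0$.

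I do not anticipate a genuine obstacle. The only delicate point is the bookkeeping just described: after peeling off $\tfrac12|\nabla(\theta G(u))|^{2}$ only half of the good term $\theta^{2}|\nabla G(u)|^{2}$ remains, so the Young splitting must be chosen so that this half alone dominates the entire cross term — which succeeds exactly because \eqref{A-1} controls $F$, and hence the cross term, by $G'G$, matching the scale of the good term. A secondary, routine point is the justification of the chain rule $\nabla G(u)=G'(u)\nabla u$ and of the product rule for $\nabla(\theta^{2}F(u))$ for merely weakly differentiable $u$, which is standard for locally Lipschitz outer functions.
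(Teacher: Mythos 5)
Your proof is correct and follows essentially the same route as the paper's: both expand $\nabla u\cdot\nabla(\theta^2F(u))$ by the product rule, identify $\theta^2F'(u)|\nabla u|^2$ with the square of $\nabla G(u)$, complete the square around $\nabla(\theta G(u))$, and absorb the cross term by Cauchy--Young, with \eqref{A-1} supplying exactly the bound $F\le c\,G'G$ needed to match the cross term to the good term. Your bookkeeping (subtracting $\tfrac12|\nabla(\theta G(u))|^2$ up front) is a little cleaner and yields a different but equally valid explicit constant $C=\tfrac12\bigl((2c+1)^2+1\bigr)$ in place of the paper's $\max\{1,1+2c^2-2c\}$.
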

\begin{proof}
By a direct computation,
\begin{align}
\nabla u\cdot \nabla  \left( \theta^{2}F(u)\right)
& = \theta^{2}F^{'}(u) \vert\nabla u \vert^{2} + 2F(u)\nabla u\cdot \theta \nabla \theta \nonumber\\
& = \theta^{2}F^{'}(u)\vert\nabla u \vert ^{2} + 2\dfrac{F(u)}{G^{'}(u)}
G^{'}(u)\nabla u\cdot \theta\nabla \theta.
\end{align}
The right-hand side of above we transform as follows.
\begin{align}
&\theta^{2}F^{'}(u) \vert\nabla u \vert^{2} + 2F(u)\nabla u\cdot \theta \nabla \theta\\
&= \theta^{2}\vert G^{'}(u)\vert^{2} \vert\nabla u\vert^2 + 2\frac{F(u)}{G^{'}(u)}  \left( G^{'}(u) \theta\nabla u   + G(u) \nabla \theta \right) \cdot \nabla \theta -2\frac{F(u)}{G^{'}(u)}G(u)|\nabla \theta|^{2} \nonumber \\
& = |\nabla(\theta G(u))-G(u)\nabla \theta|^2
+ 2 \dfrac{F(u)}{G^{'}(u)} \nabla(\theta G(u)) \cdot \nabla \theta -2 \dfrac{F(u)}{G^{'}(u)} G(u)\vert\nabla\theta\vert^{2} \nonumber\\
&=|\nabla(\theta G(u))|^{2}
+2\left[  \dfrac{F(u)}{G^{'}(u)}- G(u) \right] \nabla \theta \cdot\nabla(\theta G(u))\nonumber - 2\left[ \dfrac{F(u)}{G^{'}(u)} - G(u) \right]|\nabla \theta |^{2}G(u). \nonumber
\\
&\geq |\nabla(\theta G(u))|^{2}
- \left \vert 2\left[  \dfrac{F(u)}{G^{'}(u)}- G(u) \right]\nabla \theta \cdot\nabla(\theta G(u)) \right \vert
-2\left[ \dfrac{F(u)}{G^{'}(u)} - G(u) \right]|\nabla \theta |^{2}G(u).
\label{before-cauchy}\end{align}
By the Cauchy Inequality,
\begin{align}
2\left[  \dfrac{F(u)}{G^{'}(u)}- G(u) \right]\nabla \theta \cdot\nabla(\theta G(u))
\leq
2\left[  \dfrac{F(u)}{G^{'}(u)}- G(u) \right]^{2} |\nabla \theta |^{2} + \dfrac{1}{2}|\nabla(\theta G(u))|^{2}. \nonumber
\end{align}
Then \eqref{before-cauchy} becomes
\begin{align*}
\nabla u\cdot \nabla\left[  \theta^{2}F(u)\right] & \geq \dfrac{1}{2}|\nabla(\theta G(u))|^{2}
- 2\left[  \dfrac{F(u)}{G^{'}(u)}- G(u) \right]^{2}\cdot |\nabla \theta |^{2}
-\left[ 2 \dfrac{F(u)}{G^{'}(u)} - G(u) \right]|\nabla \theta |^{2}G(u) \nonumber \\
& = \dfrac{1}{2}|\nabla(\theta G(u))|^{2}- \left[  2 \left[\dfrac{F(u)}{G^{'}(u)}\right]^{2} -2\dfrac{F(u)}{G^{'}(u)} G(u) + G^{2}(u) \right] |\nabla \theta|^{2}
\end{align*}
Now observe that, with $L(u)=\frac{F(u)}{G(u)G'(u)}$,
\[
\left[  2 \left[\dfrac{F(u)}{G^{'}(u)}\right]^{2} -2\dfrac{F(u)}{G^{'}(u)} G(u) + G^{2}(u) \right]
= \left[2 L^2(u) - 2L(u) +1  \right]G^{2}(u)
\]
Finally, since $0\leq L(u)\leq c$ by \eqref{A-1}, we have, for some $C>1$,
\[
\frac34 \leq 2 L^2(u) - 2L(u) +1 \leq C \triangleq \max\{1, 1+2c^2-2c\}.
\]
\end{proof}
To show that \eqref{adecr} implies \eqref{A-1}, we will recall the definition of
equivalent functions.
\begin{definition}
Functions $f$ and $g$ on a set $E$ are {\it equivalent} ($f\asymp  g$) on $E$, if there exists a constant $k\geq1$ such that $ \displaystyle {k}^{-1}  g(x) \leq f(x) \leq k \ g(x)$ for all $x\in E$.
\end{definition}
\begin{proposition} \label{P-3}
Assumption \eqref{adecr} implies \eqref{A-1}.
\end{proposition}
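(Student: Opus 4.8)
The plan is to recast \eqref{A-1} as a single uniform lower bound for $G$, and then to verify that bound after changing the integration variable in $G$ from $\sigma$ to $t=I(\sigma)$; in the new variable the almost-monotonicity carried by \eqref{adecr} becomes transparent and can be integrated against a power weight. Throughout write $q:=\tfrac{\Lambda+1}{\Lambda}>1$ and $\beta(s):=a(s)I(s)$. From \eqref{F-result} one has the closed form $F(s)=\Lambda^{-q}s^{-1}I(s)^{-q}$, and logarithmic differentiation together with $I'(s)=-1/(sa(s))$ gives $\gamma(s):=sF'(s)/F(s)=q/\beta(s)-1$. By \eqref{Lambda}, $\beta(s)\le\sup_u a(u)I(u)<1/\Lambda<q$, so $\gamma$ is bounded below by a positive constant (which re-proves $F'>0$, cf.\ Lemma \ref{P-2}) and $\gamma(s)\asymp\beta(s)^{-1}$. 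Since $G'(s)=\sqrt{F'(s)}=\sqrt{\gamma(s)F(s)/s}$, one obtains
\[
\frac{F(s)}{G'(s)}=\sqrt{\frac{sF(s)}{\gamma(s)}}\asymp\sqrt{sF(s)\beta(s)}=\Lambda^{-q/2}\sqrt{a(s)\,I(s)^{1-q}},
\]
so \eqref{A-1} is equivalent to the single estimate $G(s)^2\gtrsim a(s)I(s)^{1-q}$ holding uniformly in $s>0$.

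To obtain this estimate, note that $I$ is a $C^1$ strictly decreasing bijection of $(0,\infty)$ onto itself: $I(0^+)=\infty$, while $I(\infty)=0$ because $I$ is finite (Hypothesis \ref{tau}). Put $\iota:=I^{-1}$ and $b(t):=a(\iota(t))$; then $\tfrac{d\sigma}{\sigma a(\sigma)}=-dt$, and from the reduction step $\sqrt{F'(\sigma)}\asymp\Lambda^{-q/2}\frac{I(\sigma)^{-(q+1)/2}}{\sigma\sqrt{a(\sigma)}}$. Substituting $t=I(\sigma)$ turns this into
\[
G(s)=\int_0^s\sqrt{F'(\sigma)}\,d\sigma\ \asymp\ \int_{t_0}^{\infty}t^{-(q+1)/2}\sqrt{b(t)}\,dt,\qquad t_0:=I(s).
\]

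Now use \eqref{adecr}. The change $u=\iota(t)$ converts it into the statement that $t\mapsto b(t)t^{\mu}$ is almost increasing, i.e.\ $b(t_1)t_1^{\mu}\ge c\,b(t_2)t_2^{\mu}$ whenever $t_1>t_2$; in particular $b(t)\ge c\,b(t_0)t_0^{\mu}t^{-\mu}$ for $t\ge t_0$. Hence
\[
\int_{t_0}^{\infty}t^{-(q+1)/2}\sqrt{b(t)}\,dt\ \ge\ \sqrt{c\,b(t_0)}\;t_0^{\mu/2}\int_{t_0}^{\infty}t^{-(q+1+\mu)/2}\,dt\ =\ \frac{2\sqrt{c}}{q+\mu-1}\,\sqrt{b(t_0)}\,t_0^{(1-q)/2},
\]
the integral converging since $q+\mu>1$ (indeed $q>1$), so no case distinction in $\mu$ is needed. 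Since $b(t_0)=a(s)$ and $t_0=I(s)$, combining the last two displays gives $G(s)^2\gtrsim a(s)I(s)^{1-q}$, which is \eqref{A-1} by the reduction.

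The only delicate point is the change of variables in the second step: one must check that the pointwise equivalences from the first step pass through the integral with $\sigma$-independent constants, that $I$ maps onto all of $(0,\infty)$, and that $\int_{t_0}^{\infty}t^{-(q+1)/2}\sqrt{b(t)}\,dt$ is finite --- the last being immediate since this integral is $\asymp G(s)\le\sqrt{sF(s)}<\infty$ by Remark \ref{G-F-2}. The rest is routine exponent bookkeeping.
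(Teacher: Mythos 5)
Your proof is correct, and it reaches the conclusion by a mechanism different from the paper's. Both arguments start from the same structural facts --- the closed form $F(s)=\Lambda^{-q}s^{-1}I(s)^{-q}$ and the equivalence $F'(s)\asymp F(s)/\bigl(s\,a(s)I(s)\bigr)$, which reduce \eqref{A-1} to a lower bound on $G(s)=\int_0^s\sqrt{F'}$ --- but they diverge in how that integral is estimated. The paper applies Cauchy's mean value theorem to the ratio of $[I(s)]^{-\frac{1}{2\Lambda}-\frac\mu2}$ to $\int_0^s t^{-1}[I(t)]^{-\frac1{2\Lambda}-1}[a(t)]^{-\frac12}\,dt$, producing an intermediate point $t\in(0,s)$ at which \eqref{adecr} is invoked; this yields a genuine two-sided equivalence $F/(GG')\asymp[a(s)I^\mu(s)/a(t)I^\mu(t)]^{1/2}$, but the MVT step for a ratio of two quantities vanishing at $0$ (one of them an improper integral) is left somewhat implicit. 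You instead substitute $t=I(\sigma)$, observe that \eqref{adecr} says exactly that $t\mapsto b(t)t^\mu$ is almost increasing in the new variable, bound $\sqrt{b(t)}$ from below pointwise on $(t_0,\infty)$, and integrate the resulting power weight explicitly; the convergence is guaranteed by $q>1$ alone. This buys a fully explicit constant $2\sqrt c/(q+\mu-1)$ (times the uniform equivalence constants, which you correctly note lie between $1$ and $\sqrt q$ and pass through the integral), at the cost of obtaining only the one-sided bound --- which is all \eqref{A-1} requires. Your side remarks (that $\gamma=sF'/F=q/\beta-1>0$ re-proves Lemma \ref{P-2}, and that $I$ is a decreasing bijection of $(0,\infty)$ onto itself) are accurate and properly used.
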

\begin{proof} By direct computation, it follows from \eqref{F-result} that
\begin{align}
  F(s) \asymp  \  &      \ s^{-1}I^{-\frac{1}{\Lambda}-1}(s) \label{R-1} \\
  F^{'}(s) \asymp  \  &  \  \frac{F(s)}{s}[a(s)I(s)]^{-1} \label{R-2}.
\end{align}
Using \eqref{R-1}, \eqref{R-2} and  \ref{G-def}
\begin{align}
\frac{F(s)}{ [G(s)G^{'}(s)]} & =
    \frac{\displaystyle F(s)}{\displaystyle \left(\int_{0}^{s} \sqrt{F^{'}(t) \ dt} \right)\displaystyle  \left( \sqrt{F^{'}(s)}\right) }\\
    & \asymp \frac{\displaystyle F(s)}{\displaystyle \left( \int_{0}^{s} \sqrt{\displaystyle \frac{F(t)}{t}[a(t)I(t)]^{-1}} \ dt\right)\left( \sqrt{\displaystyle \frac{F(s)}{s}[a(s)I(s)]^{-1}}\right) }\\
     & =
    \frac{\displaystyle [I(s)]^{-\frac{1}{2\Lambda}-\frac{1}{2}} [a(s)I(s)]^{\frac{1}{2}}}{\displaystyle \int_{0}^{s} t^{-1}[I(t)]^{-\frac{1}{2\Lambda}-\frac{1}{2}} [a(t)I(t)]^{-\frac{1}{2}} \ dt}\\
     & =
    \frac{\displaystyle [I(s)]^{-\frac{1}{2\Lambda} - \frac\mu2} [a(s)I^{\mu}(s)]^{\frac{1}{2}}}{\displaystyle \int_{0}^{s} t^{-1}[I(t)]^{-\frac{1}{2\Lambda}-1} [a(t)]^{-\frac{1}{2}} \ dt} \label{after 87}\ .
\end{align}
By the Cauchy's mean value theorem,
there exists $t\in(0,s)$ such that
\[
\frac{\displaystyle [I(s)]^{-\frac{1}{2\Lambda} - \frac\mu2}}{\displaystyle \int_{0}^{s} t^{-1}[I(t)]^{-\frac{1}{2\Lambda}-1} [a(t)]^{-\frac{1}{2}} \ dt}
= \left(\frac{1}{2\Lambda} + \frac\mu2\right)
\frac{\displaystyle [I(t)]^{-\frac{1}{2\Lambda} - 1 - \frac\mu2}[ta(t)]^{-1}}{\displaystyle t^{-1}[I(t)]^{-\frac{1}{2\Lambda}-1} [a(t)]^{-\frac{1}{2}}}
\asymp [a(t)I^{\mu}(t)]^{-\frac12}\ .
\]
Since \eqref{adecr} holds, one has
\begin{equation}
 \frac{F(s)}{G(s)G^{'}(s)} \asymp
 \left[\frac{a(s)I^{\mu}(s)}{a(t)I^{\mu}(t)}\right]^{\frac12}
\leq \frac1{\sqrt{c}} \ ; \  0<t<s .
\end{equation}
\end{proof}
A sufficient condition for \eqref{adecr} is given in the next remark.
\begin{remark}\label{I-B-prop}
Assume that there exists $\tilde a\in C^1(0,\infty)$ such that
 $a\asymp \tilde a$ on $(0,\infty)$, and
 \begin{equation}\label{R-4}
     \limsup_{s \to 0}   s\tilde I(s) \tilde a^{'}(s) <\infty,
 \end{equation}
\text{ where } $ \displaystyle \tilde I(s) \triangleq  \int_s^\infty \frac{dt}{t\tilde a(t)}$. Since we a free in choosing a behaviour of $P$ and $\tilde a$ at infinity, we may assume
$\limsup_{s \to \infty}   s\tilde I(s) \tilde a^{'}(s) <\infty$ as well, so function
 $s\mapsto s\tilde I(s) \tilde a^{'}(s)$ is bounded above,
 \[
 B=\sup\limits_{s>0}s\tilde I(s) \tilde a^{'}(s).
 \]
 Fix  $\mu\geq B$. Then
 the function $Q(s) = \tilde a(s)\tilde I^{\mu}(s)$ is non-increasing since
 \[
 Q'(s) = \tilde a^{'}(s)\tilde I^{\mu}(s) - \mu s^{-1}\tilde I^{\mu-1}(s) = s^{-1}\tilde I^{\mu-1}(s)
 \left[s\tilde I(s) \tilde a^{'}(s) - \mu\right]\leq 0.
 \]
 Finally, $a I^{\mu}\asymp\tilde a\tilde I^{\mu}$.
\end{remark}



\section{A Proof of Finite Propagation Speed}\label{localization}
\begin{theorem}\label{Main-T}
Let $a$ be as in Hypothesis \ref{tau}, and let \eqref{a} and \eqref{adecr} hold. Choose 
$\Lambda$ as in \eqref{Lambda} and $H,F,G$ as in \eqref{H-choice}, \eqref{F-result} and
\eqref{G-def}, respectively.

For a domain $\Omega\in\mathbb{R}^N$ and $T>0$,
let $u$ be a positive bounded weak solution to \eqref{M-2}  in $\Omega_T$, i.e. $u$ be as in Definition \ref{solution} satisfying \eqref{weak}.

Then $u$ enjoys a finite propagation speed property in the sense of \eqref{FPS}. 
\end{theorem}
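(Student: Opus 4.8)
The plan is to run a De Giorgi–Ladyzhenskaya–Ural'tseva iteration on a shrinking sequence of parabolic cylinders centred at the ball $B$, using the energy inequality \eqref{weak} together with the Caccioppoli estimate of Proposition \ref{cacciopoly} and the nonlinear parabolic Sobolev inequality of Proposition \ref{parabolic_Sobolev}. Fix the open ball $B\subset\Omega$ and $\epsilon\in(0,1)$. I would introduce radii $r_n = \tfrac{1+\epsilon}{2} + 2^{-n-1}(1-\epsilon)$ (so $r_0$ is close to the radius of $B$ and $r_n\downarrow \tfrac{1+\epsilon}{2}$, with $\epsilon B$ well inside the limit ball), and cut-off functions $\theta_n\in Lip_c(B)$ with $\theta_n=1$ on the ball $B_{r_{n+1}}$, $\theta_n=0$ outside $B_{r_n}$, and $|\nabla\theta_n|\le C\,2^n/(1-\epsilon)$. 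The time interval $[0,s]$ will be fixed at the end once $s$ is chosen small. The energy quantities to iterate are
\[
y_n \triangleq \sup_{0\le\tau\le s}\int_{B}\theta_n^2\,H(u(\tau))\,dx \;+\; \int_0^s\!\!\int_{B}\bigl|\nabla(\theta_n u)\bigr|^2\,dx\,d\tau,
\]
or a close variant with $G$ in place of the gradient term; the aim is to show $y_n\to0$, which forces $H(u)=0$, hence $u=0$, on $\epsilon B\times[0,s]$.

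The first step is the energy estimate. Testing \eqref{weak} with $\phi = \theta_n^2\,\chi_{[0,\tau]}$ (suitably mollified in time, using the continuity of $t\mapsto u(t)$ from Definition \ref{solution} to handle the time term and produce $\int \theta_n^2 H(u(\tau))$ minus the initial term, which vanishes since $u(\cdot,0)=0$ on $B$), the left side becomes $\int_0^\tau\!\int \nabla u\cdot\nabla(\theta_n^2 F(u))$. Proposition \ref{cacciopoly} bounds this below by $\tfrac12\int_0^\tau\!\int|\nabla(\theta_n G(u))|^2 - C\int_0^\tau\!\int G^2(u)|\nabla\theta_n|^2$. Rearranging gives
\[
\sup_{0\le\tau\le s}\int\theta_n^2 H(u(\tau)) + \tfrac12\int_0^s\!\!\int|\nabla(\theta_n G(u))|^2
\;\le\; C\,\frac{4^n}{(1-\epsilon)^2}\int_0^s\!\!\int_{\{\theta_n>0\}} G^2(u)\,dx\,d\tau.
\]
The right-hand integral is over $B_{r_n}$, where $\theta_{n-1}=1$, so it is controlled by $\int_0^s\!\int_{B_{r_n}} G^2(u)$, which is exactly what Proposition \ref{parabolic_Sobolev} estimates (with $K=B_{r_n}$, $\theta=\theta_{n-1}$): it is bounded by $S^{k(1+j)} s^{1-(1+j)k}\,y_{n-1}^{1+jk}$. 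Combining, and reindexing, I obtain a recursion of the form $y_n \le C\,s^{1-(1+j)k}\,4^n\,y_{n-1}^{1+jk}$, i.e. $y_{n+1}\le \tilde c\,s^{\gamma}\,b^n\,y_n^{1+\delta}$ with $b=4$, $\delta = jk>0$, $\gamma = 1-(1+j)k$, and $\tilde c$ depending on $N,\Lambda,\epsilon$ and the Sobolev constant but not on $s$ or $n$.

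The final step is to invoke Lemma \ref{Lady-lemma}: it suffices that $\gamma>0$ (so that small $s$ makes the effective constant small — here one checks $1-(1+j)k>0$ from $k=\Lambda/(\Lambda+j+j\Lambda)$, since $(1+j)k = (1+j)\Lambda/(\Lambda+j+j\Lambda)<1$) and that $y_0 \le (\tilde c s^\gamma)^{-1/\delta} b^{-1/\delta^2}$. Since $y_0$ is bounded by a fixed constant coming from $\|u\|_{L^\infty}$, $\|\nabla u\|_{L^2_{loc}}$ on a fixed cylinder over $B$ and the boundedness of $H$, while the threshold $(\tilde c s^\gamma)^{-1/\delta}b^{-1/\delta^2}\to\infty$ as $s\downarrow0$, we may choose $s\in(0,T]$ small enough that the smallness condition holds. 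Then $y_n\to0$, so $\int_0^s\!\int_{\epsilon B} H(u)=0$; since $H>0$ on $(0,\infty)$ this gives $u=0$ a.e. on $\epsilon B\times[0,s]$, and by the continuity in Definition \ref{solution} everywhere there, which is \eqref{FPS}.

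The main obstacle I expect is the rigorous justification of the energy estimate, specifically the time-term manipulation: \eqref{weak} is an inequality for the weak form of $[H(u)]_t$ tested against non-negative $\phi$, and to extract $\sup_\tau\int\theta_n^2 H(u(\tau))$ one must mollify the indicator $\chi_{[0,\tau]}$ in time, integrate by parts, and pass to the limit, using only that $t\mapsto u(t)\in L^0_{loc}$ is continuous (hence $t\mapsto H(u(t))\in L^1_{loc}$ is continuous) and that $u(\cdot,0)=0$ on $B$ — care is needed because $u$ is not differentiable in $t$. A secondary technical point is checking that the hypotheses of Propositions \ref{parabolic_Sobolev} and \ref{cacciopoly} (in particular $\nabla G(u)\in L^2_{loc}$) are met; this follows from $\nabla u\in L^2_{loc}$, $u\in L^\infty_{loc}$ and the local boundedness of $G'=\sqrt{F'}$ away from $0$, but near $\{u=0\}$ one uses that the Caccioppoli bound itself produces $\nabla(\theta G(u))\in L^2$ a posteriori, so the argument should be set up to first derive the estimate for truncations $u\wedge\delta$ or via an approximation and then pass to the limit.
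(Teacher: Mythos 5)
Your proposal follows essentially the same route as the paper: test \eqref{weak} with a mollified-in-time cutoff to get the Caccioppoli/energy estimate via Proposition \ref{cacciopoly}, feed the resulting $G^2$ term into the parabolic Sobolev inequality of Proposition \ref{parabolic_Sobolev}, and close a Ladyzhenskaya--Ural'tseva iteration over shrinking cutoffs, with the smallness of the initial term obtained from the positive power $s^{1-(1+j)k}$ (the paper merely absorbs this power into the iterated quantity $Y_n[s]=s^{\beta}y_n[s]$ with $\beta=\frac{1-(1+j)k}{kj}$, which is equivalent to your choice of shrinking $s$). The technical points you flag (time mollification using the continuity of $t\mapsto u(t)$ and the a posteriori membership $\nabla G(u)\in L^2_{loc}$ from the energy estimate) are exactly how the paper handles them.
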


\begin{proof}
Let $B\Subset\Omega$ be an open ball and let $\epsilon\in(0,1)$. Assume that $u(x,0)=0$ for all $x\in B$. We are to construct $T'\in(0,T)$ such that
$u=0$ on $\epsilon B\times[0,T'].$

We start from proving the following estimate. With $C$ as is \eqref{lemma-1-R}, for every 
$\theta\in Lip_c(B)$, one has
\begin{align}
\int_{B}  \theta^{2}H(u)\ dx  +
\frac{1}{2} \int_{0}^{t}\int_{B}| \nabla(\theta G(u))|^{2}\ dxd\tau
 \leq
C\int_{0}^{t}\int_B G^{2}(u) |\nabla \theta|^2\ dxd\tau. \label{fsp-2}
\end{align}
Indeed, by Proposition \ref{cacciopoly}, due to assumption \eqref{adecr}, we can apply
\eqref{lemma-1-R} to the left hand side of \eqref{weak} with $\phi(x,\tau)=\theta^2(x)\zeta(\tau)$, where $\zeta\in Lip_c(0,t),$ $0\leq\zeta\leq1$, approximating $\ind_{(0,t)}$ in $BV[0,T]$.
Then
\begin{align*}
\frac{1}{2} \int_{0}^{t}\int_{B}| \nabla(\theta G(u))|^{2}\zeta\ dxd\tau   
\leq C\int_{0}^{t}\int_B G^{2}(u) |\nabla \theta|^2\zeta\ dxd\tau 
+ \int_0^t \int_B \theta^2H(u)\ dx\ \zeta' d\tau.
\end{align*}
Note that, by Definition \ref{solution}, map $\tau\mapsto \int_B \theta^2H\big(u(\tau)\big)\ dx$ is continuous, and that
$\zeta' d\tau \to \delta_0 - \delta_t$. Hence
\[
\int_0^t \int_B \theta^2H(u)\ dx\ \zeta' d\tau
\to \int_B \theta^2H\big(u(0)\big)\ dx
- \int_B \theta^2H\big(u(t)\big)\ dx
= - \int_B \theta^2H\big(u(\tau)\big)\ dx.
\]
(The last equality is yielded by $u(x,0)=0$ for $x\in B$.) Hence \eqref{fsp-2}. 

In particular, \eqref{fsp-2} implies that  $\nabla G(u) \in L^2_{loc}(\Omega\times [0,T])$. 
So, by Proposition \ref{parabolic_Sobolev}, due to assumption \eqref{a}, we can apply \eqref{lemma-2-R} to the right hand side of \eqref{fsp-2} and get the following. 
For $\tilde\theta, \hat\theta \in Lip_c(B)$,
$\hat\theta =1$ on $\supp \tilde\theta$, one has
\begin{align*}
\int_B  \tilde\theta^{2}H\big(u(t)\big)\ dx & +
\frac{1}{2} \int_{0}^{t}\int_{\Omega}| \nabla(\tilde\theta G(u))|^{2} \ dxd\tau\\
   & \leq D\|\nabla\tilde\theta\|_\infty^2
    t^{1-k(1+j)}
    \left[\sup_{0\leq \tau \leq t} \int_B\hat\theta^{2}H\big(u(\tau)\big) dx + \int_{0}^{t} \int_B |\nabla(\hat\theta G(u))|^{2} dx  d\tau\right]^{1+jk},
\end{align*}
with $D=C S^{k(1+j)}$. Take supremum in $t\in(0,s)$
for $0<s\leq T$, to obtain
\begin{equation*}
\begin{aligned}
 \sup_{0\leq \tau \leq s}& \int_B\tilde\theta^{2}H\big(u(\tau)\big) dx + \int_{0}^{s} \int_B |\nabla(\tilde\theta G(u))|^{2} dx  d\tau  
 \\
 \leq & D\|\nabla\tilde\theta\|_\infty^2
    s^{1-k(1+j)}
    \left[\sup_{0\leq \tau \leq s} \int_B\hat\theta^{2}H\big(u(\tau)\big) dx + \int_{0}^{s} \int_B |\nabla(\hat\theta G(u))|^{2} dx  d\tau\right]^{1+jk}.
 \end{aligned}
\end{equation*}
Multiply the latter with $s^\beta$ with $\beta > 0$ such that
\[
\beta +1-(1+j) k = \beta(1+k j)\quad
\left(\beta \triangleq \frac{1-(1+j)k}{k j} \right).
\]
Then we get 
\begin{equation}\label{fsp-3}
\begin{aligned}
 s^\beta\sup_{0\leq \tau \leq s}& \int_B\tilde\theta^{2}H\big(u(\tau)\big) dx + s^\beta\int_{0}^{s} \int_B |\nabla(\tilde\theta G(u))|^{2} dx  d\tau  
 \\
 \leq & D\|\nabla\tilde\theta\|_\infty^2
    \left[s^\beta\sup_{0\leq \tau \leq s} \int_B\hat\theta^{2}H\big(u(\tau)\big) dx + s^\beta\int_{0}^{s} \int_B |\nabla(\hat\theta G(u))|^{2} dx  d\tau\right]^{1+jk}.
 \end{aligned}
\end{equation}
Estimate \eqref{fsp-3} is vehicle of the following iteration procedure. 

Choose $b>2$ such that 
\[\frac{b-2}{b-1}=\epsilon \quad\left(b=1+\frac1{1-\epsilon} \right)\]. 
Define
\[\epsilon_n=\frac{b-2 + b^{-n}}{b-1},\qquad n=0,1,2,\ldots\]
Obviously, 
\[\epsilon_0=1,\quad \epsilon_\infty=\epsilon,\quad \epsilon_n-\epsilon_{n+1}=b^{-(n+1)}.\]
So, for $n=0,1,2,\ldots,$ we can choose 
$\theta_n\in Lip_c(\epsilon_nB)$ such that
$\theta_n=1$ on $\epsilon_{n+1}B$ and 
\[
\|\nabla\theta_n\|_\infty^2\leq K b^{2(n+1)}
\]
with the same constant $K$ for all $n=0,1,2,\ldots$

Define
\begin{align}
Y_{n} [s] \triangleq s^{\beta} \sup_{0 \leq \tau \leq s}\int_B\theta_{n}^{2}H(u) dx +s^{\beta} \int_{0}^s \int_B |\nabla(\theta_{n}G(u))|^{2} dx d\tau. \label{En-fun}
\end{align}
Then \eqref{fsp-3} yields the iterative inequality
\begin{align}
    Y_{n+1}[s] \leq  DKb^4 \cdot ({b^2})^{n}\, Y_{n}^{1+k j}[s].
\end{align}
Finally, choose $s>0$ such that
\begin{equation}\label{X_0-assump}
Y_{0}[s]\leq (DKb^4 )^{-\dfrac{1}{k j}} b^{-\dfrac{2}{k^2 j^2}}.
\end{equation}
Then by  \cite{Lady-lemma},
\[
s^{\beta} \sup_{0 \leq \tau \leq s}\int_{\epsilon B}H(u) dx \leq \lim\limits_{n\to\infty} Y_{n}[s] = 0.
\]
\end{proof}

\begin{remark}
The use of the Ladyzhenskaya-Uraltceva iterative Lemma for the proof of finite speed of propagation for degenerate parabolic  equation was first used in \cite{ves-ted}.
\end{remark}


\section{Models for Degeneracy} \label{Exmp-P-F}
Without loss of generality, in this section, we will assume that $u \in (0,1]$, and  illustrate some generic examples of the function $a$, for which hold all constrains on the functions $F$, $G$ and $H$, with the following summarized remark.
\begin{remark}\label{P-test}
Let  $a$ and $I\in C^1(0,\infty)$ be as in Hypothesis \ref{tau}. Assume that
\begin{align}
    \limsup\limits_{s\to0} a(s)I(s) & < \infty,\\
    \limsup\limits_{s\to0} sa{'}(s)I(s) & < \infty.
\end{align}
Then Remark \ref{I-B-prop} yields Theorem \ref{Main-T} on finite speed of propagation.
\end{remark}
\begin{exmpl}\label{Ex-1}{$a(s)=s^{\beta}$,  where  $s\in [0,\infty)$ and $\beta > 0$}. \normalfont
\begin{align}
 I(s) & =  \int_{s}^\infty t ^{-\beta-1} d t =  \ {\beta^{-1}} s^{-\beta}.\\
 a(s)I(s) & =  {\beta^{-1}}.  \\
 sa'(s)I(s) & = 1.
\end{align}
\end{exmpl}
\begin{exmpl}{$ a(s)= \exp \left(-\dfrac{1}{s^{\beta}}\right), s \in [0,1), \beta > 0$}.\normalfont
\begin{align}
 I(s) = \  &  \int_{s}^{1} t^{-1}\exp \left(\dfrac{1}{t^{\beta}}\right) dt .\\
a(s)I(s)
=  \ &  {\exp\left(-\dfrac{1}{s^{\beta}}\right)}\left[\int_{s}^{1} t ^{-1}\exp  \left(\dfrac{1}{t^{\beta}}\right) dt \right].
\end{align}
By L'Hôpital's rule
\begin{align} \displaystyle
\lim_{s \to 0} a(s)I(s) \equiv  \ \lim_{s \to 0} s^{\beta} \equiv  \  0 \ .
\end{align}  
Then
\begin{align}
    sI(s)a{'}(s) =  \ &  \  s \left[\int_{s}^{1} t ^{-1}\exp \left(\dfrac{1}{t^{\beta}}\right) dt \right] \exp \left(-\frac{1}{s^{\beta}}\right)s^{\beta-1},\\
 \lim_{s \to 0} sI(s)a{'}(s)   =  & \lim_{s \to 0}
    \frac{ \displaystyle \left[\int_{s}^{1}t^{-1} \exp \left( \frac{1}{t^{\beta}}\right) dt \right]s^{-\beta}}{ \beta \displaystyle \exp \left( \frac{1}{s^{\beta}}\right)}.
\end{align}
By L'Hôpital's rule
\begin{align}
 \lim_{s \to 0} sI(s)a{'}(s)
    \equiv & \ 1 +  \lim_{s \to 0}  \frac{ \left[\displaystyle \int_{s}^{1}t^{-1} \exp \left( \frac{1}{t^{\beta}}\right) dt\right]}{ \displaystyle \exp \left( \frac{1}{s^{\beta}}\right)}  \\
  \equiv & \ 1 +  \lim_{s \to 0}  s^{\beta} \\
   = & \ 1.
\end{align}
\end{exmpl}
\vspace{-0.4 cm}
\begin{exmpl}{$ \displaystyle a(s)= \exp \left(-\int_{s}^{1} \frac{\zeta(\tau)}{\tau} d\tau\right), s \in (0,1]$} \normalfont and $ 0 < k_{1} < \zeta(s) < k_{2}$.  \begin{align}
    \displaystyle I(s) =\int_{s}^{1} t^{-1}\exp \left(\int_{t}^{1} \frac{\zeta(\tau)}{\tau} d\tau\right) \ dt.
    \end{align}
Then
\begin{align}
 \displaystyle a(s)I(s)
     = & \  \frac{ \displaystyle\left[\int_{s}^{1} t^{-1}\exp \left(\int_{t}^{1} \frac{\zeta(\tau)}{\tau} d\tau\right) \ dt  \right]}
     {\displaystyle\left[ \exp \left(\int_{s}^{1} \frac{\zeta(\tau)}{\tau} d\tau\right)\right]}.
\end{align}
By L'Hôpital's rule
\begin{align}
   \limsup_{s \to 0} a(s)I(s) \equiv & \  \limsup_{s \to 0} \frac{1}{\zeta(s)}  <\frac1{k_1}.  \label{ex-3}
      \end{align}
Then
\begin{align}
s I(s)a'(s) = & \ {\displaystyle\left[\int_{s}^{1} t^{-1}\exp \left(\int_{t}^{1} \frac{\zeta(\tau)}{\tau} d\tau\right) \ dt \right] }{\displaystyle \exp \left(-\int_{t}^{1} \frac{\zeta(\tau)}{\tau} d\tau\right)\zeta(s)},\\
\lim_{s \to 0} s I(s)P^{'}(s) \equiv  \ & \lim_{s \to 0}  \frac{\displaystyle\left[\int_{s}^{1} t^{-1}\exp \left(\int_{s}^{1} \frac{\zeta(\tau)}{\tau} d\tau\right) \ dt \right]}{\displaystyle\exp \left(\int_{t}^{1} \frac{\zeta(\tau)}{\tau} d\tau\right)}.
\end{align}
We apply L'Hôpital's rule to get
\begin{align}
\limsup_{s \to 0} s I(s)P'(s) =  \limsup_{s \to 0} \frac{1}{\zeta(s)}  < \frac1{k_1}.  \end{align}
\end{exmpl}
\begin{exmpl}\label{Ex-4}{\normalfont Let $\zeta$  be such that $\displaystyle 0 < \ k_{3} \leq \frac{\zeta}{\zeta_{0}} \leq k_{4}$  for some $\zeta_{0}^{'} \leq 0$  and $\displaystyle \sup_{0<s<1}{s|\zeta_{0}^{'}|}{\zeta_{0}^{-1}} =c_{0} < \infty$. (Note that  $ \displaystyle \lim_{\tau \to 0} \zeta(\tau) = \infty$.) Let $ \displaystyle a(s)= \exp \left(-\int_{s}^{1} \frac{\zeta(\tau)}{\tau} d\tau\right), \ s \in (0,1] $}.\normalfont  \vspace{0.2 cm}\\
Similarly to \eqref{ex-3} we get
\begin{align}
\displaystyle  \lim_{s \to 0} a(s)I(s) \equiv \lim_{s\mapsto 0} \frac{1}{\zeta(s)}  = 0.
\end{align}
Then
\begin{align}
sI(s)a'(s) =  & s \left[\int_{s}^{1} t^{-1}\exp \left(\int_{t}^{1} \frac{\zeta(\tau)}{\tau} d\tau\right) \ dt\right] \exp \left(-\int_{t}^{1} \frac{\zeta(\tau)}{\tau} d\tau\right) \zeta(s).
\end{align}
By L'Hôpital's rule
\begin{align}
\lim_{s \to 0} sI(s)a{'}(s)
\equiv   & \ 1 + \lim_{s \to 0} \frac{\left[\displaystyle \int_{s}^{1} t^{-1}\exp \left(\int_{t}^{1} \frac{\zeta(\tau)}{\tau} d\tau\right) \ dt \right] \frac{ \displaystyle s \vert \zeta_{0}^{'}(s)\vert}{ \displaystyle \zeta(s)}}{\displaystyle \exp \left(\int_{s}^{1} \frac{\zeta(\tau)}{\tau} d\tau\right) \ dt} \nonumber \\
\equiv   & \ 1 +  \lim_{s \to 0}  \frac{C_{0}}{\zeta(s)}
=  \ 1.
\end{align}.
\end{exmpl}



\end{document}